\newtheorem{thm}{Theorem}
\newtheorem{lem}[thm]{Lemma}
\newtheorem{coro}[thm]{Corollary}
\newtheorem{conj}[thm]{Conjecture}
\newtheorem*{remark}{Remark}
\newcommand{\T}{\mathcal{T}}
\newcommand{\A}{\mathcal{A}}
\newcommand{\pattern}[4]{										
	\raisebox{0.6ex}{
		\begin{tikzpicture}[scale=0.35, baseline=(current bounding box.center), #1]
		\foreach \x/\y in {#4}		\fill[gray!50] (\x,\y) rectangle +(1,1);
		\draw (0.01,0.01) grid (#2+0.99,#2+0.99);
		\foreach \x/\y in {#3}		\filldraw (\x,\y) circle (6pt);
		\end{tikzpicture}}
}
\newcommand{\dbrac}[1]{{\llbracket #1 \rrbracket}} 	
\newcommand{\boks}[2]{({#1, #2})}   
\definecolor{red}{rgb}{1,0,0}
{}
\begin{document}

\begin{center}
{\large \bf Joint Equidistributions of Mesh Patterns 123 and 321 with Symmetric and Minus-Antipodal Shadings}
\end{center}

\begin{center}
Shuzhen Lv  and  Philip B. Zhang\footnote[1]{Corresponding author}
\\[6pt]

College of Mathematical Sciences
 \& Institute of Mathematics and Interdisciplinary Sciences
\\
Tianjin Normal University \\ Tianjin  300387, P. R. China\\[6pt]
Email:
      {\tt zhang@tjnu.edu.cn}
\end{center}

\noindent\textbf{Abstract.}
In this paper, we extend recent results by Lv and Kitaev by proving 20~(out of 22 possible) joint equidistributions of mesh patterns 123 and 321 with symmetric shadings, as well as all 36 joint equidistributions of these patterns with minus-antipodal shadings. Our results link several joint equidistributions of mesh patterns to various integer sequences, including unsigned Stirling numbers of the first kind, harmonic numbers, and the numbers of inversion sequences avoiding a certain vincular pattern studied by Lin and Yan. 

\noindent {\bf Keywords:} 
Pattern avoidance,
Mesh pattern, Joint equidistribution,  Symmetric shading, Minus-antipodal shading, Stirling number of the first kind\\

\noindent {\bf AMS Subject Classifications:} 05A05, 05A15.\\


\section{Introduction}\label{intro}


The concept of a {\em mesh pattern} was introduced by Br\"{a}nd\'{e}n and Claesson \cite{BrCl}. The study of mesh patterns  has become a prominent area of research within the theory of patterns in permutations, which has attracted significant attention (see~\cite{Kit}). For enumerative results related to mesh patterns, see, for example,~\cite{AKV,  Hilmarsson2015Wilf, KL, KZ, KZZ, KLv, T2}.

An $n$-permutation is an arrangement of $\{1, 2, \ldots, n\}$, and the set of all $n$-permutations is denoted by $S_n$. 
 A mesh pattern of length $k$ is a pair $(\tau,R)$, where $\tau \in S_k$ and~$R$ is a subset of $\dbrac{0,k} \times \dbrac{0,k}$, where
$\dbrac{0,k}$ denotes the interval of integers from $0$ to $k$.
Let $\boks{i}{j}$ be the box whose corners have coordinates $(i,j), (i,j+1),
(i+1,j+1)$, and $(i+1,j)$, and let the horizontal lines (resp.,   vertical lines) represent the values (resp.,   positions) in the pattern. Mesh patterns can be drawn by shading the boxes in $R$. A mesh pattern with $\tau=123$ and $R = \{\boks{0}{0},\boks{1}{2},\boks{2}{1},\boks{3}{1}\}$ is drawn as follows
\[
\pattern{scale=0.8}{3}{1/1,2/2,3/3}{0/0,1/2, 2/1,3/1}.
\] 
A mesh pattern has a {\em symmetric shading}
if the shading of the box $(i,j)$  implies the shading of  the box $(j,i)$, and 
 {\em minus-antipodal shading} if, for any $i\neq j$, precisely  one of the boxes $(i,j)$ and $(j,i)$ is shaded. See Table~\ref{tab-sy-shadings} (resp., Table~\ref{tab-anti-shadings}) for examples of symmetric (resp., minus-antipodal) shadings of mesh patterns. 
 
Given a permutation $\pi = \pi_1 \pi_2 \cdots \pi_n \in S_n$, a subsequence $\pi' = \pi_{i_1} \pi_{i_2} \cdots \pi_{i_k}$ is an \emph{occurrence} of a mesh pattern $p = (\tau, R)$ in $\pi$ if $(i)$ $\pi'$ is order-isomorphic to $\tau$, and $(ii)$ the shaded regions defined by $R$ contain no elements of $\pi$ that are not in $\pi'$.
Otherwise, $\pi$ is said to avoid $p$, denoted by $\pi \in S_n(p)$. 
 For example, consider the mesh pattern $p=\bigl(123,\{(0,0),(1,2),(2,1),(3,1)\}\bigr)$.  
The permutation $23154$ contains two occurrences of $p$, namely $235$ and $234$.
However, the permutation $14325$ contains three occurrences of the pattern $123$, namely $145$, $135$ and $125$, but it avoids $p$ because some entries lie inside the shaded regions, as shown in Fig~\ref{fig-exa-14325}.

\begin{figure}[h]
\begin{center}
\setlength{\tabcolsep}{4pt}   
\begin{tabular}{ccc}
\begin{tikzpicture}[scale=0.4]
  \tikzset{grid/.style={draw,step=1cm,gray!100,thin},
           graycell/.style={fill=gray!50,draw=none,minimum width=1cm,minimum height=1cm,anchor=south west}}
  \fill[graycell] (0,3) rectangle (1,4);
  \fill[graycell] (1,3) rectangle (4,0);
  \draw[grid] (0,0) grid (4,4);
  
  \draw[thick] (0,0) circle (5pt); 
  \draw[thick] (0,0) circle (2pt);
  \draw[thick] (1,3) circle (5pt);
  \draw[thick] (1,3) circle (2pt);
  \filldraw[black] (2,2) circle (3pt);
  \filldraw[black] (3,1) circle (3pt);
  \draw[thick] (4,4) circle (5pt);
  \draw[thick] (4,4) circle (2pt);
\end{tikzpicture}
&
\begin{tikzpicture}[scale=0.4]
  \tikzset{grid/.style={draw,step=1cm,gray!100,thin},
           graycell/.style={fill=gray!50,draw=none,minimum width=1cm,minimum height=1cm,anchor=south west}}
  \fill[graycell] (0,2) rectangle (2,4);
  \fill[graycell] (2,2) rectangle (4,0);
  \draw[grid] (0,0) grid (4,4);
  
  \draw[thick] (0,0) circle (5pt); 
  \draw[thick] (0,0) circle (2pt);
  \filldraw[black] (1,3) circle (3pt);
  \draw[thick] (2,2) circle (5pt); 
  \draw[thick] (2,2) circle (2pt);
  \filldraw[black] (3,1) circle (3pt);
  \draw[thick] (4,4) circle (5pt); 
  \draw[thick] (4,4) circle (2pt);
\end{tikzpicture}
&
\begin{tikzpicture}[scale=0.4]
  \tikzset{grid/.style={draw,step=1cm,gray!100,thin},
           graycell/.style={fill=gray!50,draw=none,minimum width=1cm,minimum height=1cm,anchor=south west}}
  \fill[graycell] (0,1) rectangle (3,4);
  \fill[graycell] (3,1) rectangle (4,0);
  \draw[grid] (0,0) grid (4,4);
  
  \draw[thick] (0,0) circle (5pt); 
  \draw[thick] (0,0) circle (2pt);
  \filldraw[black] (1,3) circle (3pt);
  \filldraw[black] (2,2) circle (3pt);
  \draw[thick] (3,1) circle (5pt); 
  \draw[thick] (3,1) circle (2pt);
  \draw[thick] (4,4) circle (5pt); 
  \draw[thick] (4,4) circle (2pt);
\end{tikzpicture}
\end{tabular}
\end{center}
\caption{An example of the permutation $14325$ avoiding $p$, where the circled dots in the diagrams represent the respective occurrences of the pattern $123$.}\label{fig-exa-14325}
\end{figure}

Two patterns $q_1$ and $q_2$ are {\em Wilf-equivalent}, denoted $q_1\sim q_2$, if  $|S_n(q_1)|=|S_n(q_2)|$. Moreover, $q_1$ and $q_2$ are called {\em equidistributed}, denoted~$q_1\sim_d q_2$, 
if the number of permutations of length~$n$ with $k$ occurrences of $q_1$ is equal to that with $k$ occurrences of $q_2$ for any nonnegative integers $n$ and $k$. The stronger notion of \textit{joint equidistribution} of two patterns, denoted $q_1\sim_{jd}q_2$, means that the number of $n$-permutations with $k$ occurrences of $q_1$ and $\ell$ occurrences of $q_2$ is equal to that with $\ell$  occurrences of $q_1$ and $k$ occurrences of $q_2$. 

Hilmarsson et al.~\cite{Hilmarsson2015Wilf} initiated the systematic study of the avoidance of mesh patterns. This research was later extended by Kitaev and Zhang in~\cite{KZ}, focusing on the distribution of mesh patterns. Subsequently, Han and Zeng~\cite{Han-Zeng} resolved several distribution conjectures posed by Kitaev and Zhang. More recently, Lv and Kitaev~\cite{KLv} established 75 joint equidistribution results for the mesh patterns 123 and 132 with identical symmetric shadings, primarily using bijections.

Note that the permutations 123 and 321 remain invariant under the group-theoretic inverse, which corresponds to the reflection of the elements across the diagonal in the respective permutation matrices. This observation motivates our study of the joint equidistributions of mesh patterns 123 and 321 with certain shadings, extending the results of Lv and Kitaev. 

In this paper, as suggested by computer experiments, we derive 56 of the 58 possible joint equidistributions of the mesh patterns 123 and 321 as presented in Tables~\ref{tab-sy-shadings} and~\ref{tab-anti-shadings}, which arise from 20 identical symmetric shadings  and 36  minus-antipodal shadings  considered  in Sects.~\ref{sec-sy} and~\ref{sec-an-sy}, respectively.  We also propose the remaining two pairs as conjectures.
Our main results are obtained through recurrence relations for the joint distribution generating functions, which lead to expressions that connect to several integer sequences. 
The most interesting pairs, Nr.~A17--A24 discussed in Theorem~\ref{thm-pairs-39-47}, have joint distributions involving both the \emph{unsigned Stirling numbers of the first kind} and the \emph{harmonic numbers}.
It is also worth pointing out that one of the patterns in any pair among Nr.~A25--A36 is equidistributed, as deduced in Corollary~\ref{coro-equi}, and is linked to the \emph{inversion sequences} avoiding a certain \emph{vincular pattern}.

This paper is organized as follows. In Sect. ~\ref{sec-sy}, we consider 20 joint equidistributions with symmetric shadings, summarizing our results in Table~\ref{tab-sy-shadings}. The joint equidistribution of two pairs is left as a conjecture (see Conjecture~\ref{con-pairs-21-22}), which we confirm in the case of avoidance by proving the Wilf-equivalence of the pairs S21 and S22 (see Theorem~\ref{thm-pairs-21-22}).

In Sect.~\ref{sec-an-sy}, we prove 36  joint equidistributions with minus-antipodal shadings and present our results in Table~\ref{tab-anti-shadings}. These results are connected to many known integer sequences, both with and without combinatorial interpretations, which naturally arise within our framework. In particular, the formula in~\eqref{T-pair-39-generating} is related to the unsigned Stirling number of the first kind, while the formula in~\eqref{eq-har} corresponds to the harmonic numbers. Additionally, the formula in~\eqref{I_n,k=T_n,k} is related to the number of inversion sequences avoiding a specific vincular pattern, as obtained by Lin and Yan~\cite{Lin-Yan}.

\begin{table}
 	{
 		\renewcommand{\arraystretch}{1.6}
 		\setlength{\tabcolsep}{5pt}
 \begin{center} 
 		\begin{tabular}{|c c c c c c c
 		c|c|}
 			\hline
 		\footnotesize{Nr.}	& {\footnotesize Patterns}  & \footnotesize{Nr.}	& {\footnotesize Patterns}  & \footnotesize{Nr.}	& {\footnotesize Patterns}  & \footnotesize{Nr.}	& {\footnotesize Patterns} & {\footnotesize Reference}  \\
 			\hline
 			\hline

\footnotesize{S1} & \multicolumn{1}{c|}{$\pattern{scale = 0.5}{3}{1/1,2/2,3/3}{0/0,0/1,0/2,0/3,1/0,1/1,1/2,1/3,2/0,2/1,2/2,2/3,3/0,3/1,3/2,3/3}\pattern{scale = 0.5}{3}{1/3,2/2,3/1}{0/0,0/1,0/2,0/3,1/0,1/1,1/2,1/3,2/0,2/1,2/2,2/3,3/0,3/1,3/2,3/3} $} & 
\footnotesize{S2} & \multicolumn{1}{c|}{$\pattern{scale = 0.5}{3}{1/1,2/2,3/3}{0/0,0/1,0/2,0/3,1/0,1/3,2/0,2/3,3/0,3/1,3/2,3/3}\pattern{scale = 0.5}{3}{1/3,2/2,3/1}{0/0,0/1,0/2,0/3,1/0,1/3,2/0,2/3,3/0,3/1,3/2,3/3}$}   &
\footnotesize{S3} & \multicolumn{1}{c|}{$\pattern{scale = 0.5}{3}{1/1,2/2,3/3}{0/0,0/3,1/1,1/2,2/1,2/2,3/0,3/3}\pattern{scale = 0.5}{3}{1/3,2/2,3/1}{0/0,0/3,1/1,1/2,2/1,2/2,3/0,3/3}$} & 
\footnotesize{S4} & $\pattern{scale = 0.5}{3}{1/1,2/2,3/3}{0/0,0/3,3/0,3/3}\pattern{scale = 0.5}{3}{1/3,2/2,3/1}{0/0,0/3,3/0,3/3}$ &\footnotesize{\multirow{2.5}{2.2cm}{Subsection~\ref{sub-sy-cr}}}

\\[5pt]
\cline{1-8}
\footnotesize{S5} & \multicolumn{1}{c|}{$\pattern{scale = 0.5}{3}{1/1,2/2,3/3}{0/1,0/2,1/0,1/1,1/2,1/3,2/0,2/1,2/2,2/3,3/1,3/2}\pattern{scale = 0.5}{3}{1/3,2/2,3/1}{0/1,0/2,1/0,1/1,1/2,1/3,2/0,2/1,2/2,2/3,3/1,3/2}$} &
\footnotesize{S6} & \multicolumn{1}{c|}{$\pattern{scale = 0.5}{3}{1/1,2/2,3/3}{0/1,0/2,1/0,1/3,2/0,2/3,3/1,3/2}\pattern{scale = 0.5}{3}{1/3
,2/2,3/1}{0/1,0/2,1/0,1/3,2/0,2/3,3/1,3/2}$} &
\footnotesize{S7} & \multicolumn{1}{c|}{$\pattern{scale = 0.5}{3}{1/1,2/2,3/3}{1/1,1/2,2/1,2/2}\pattern{scale = 0.5}{3}{1/3,2/2,3/1}{1/1,1/2,2/1,2/2}$} & 
\footnotesize{S8} &  \multicolumn{1}{c|}{$\pattern{scale = 0.5}{3}{1/1,2/2,3/3}{}\pattern{scale = 0.5}{3}{1/3,2/2,3/1}{}$}&
\\[5pt]
\hline
\footnotesize{S9} & $\pattern{scale = 0.5}{3}{1/1,2/2,3/3}{0/0,0/1,0/2,0/3,1/0,1/1,1/2,1/3,2/0,2/1,2/2,2/3,3/0,3/1,3/2}\pattern{scale = 0.5}{3}{1/3,2/2,3/1}{0/0,0/1,0/2,0/3,1/0,1/1,1/2,1/3,2/0,2/1,2/2,2/3,3/0,3/1,3/2}$ & 
\footnotesize{S10} & \multicolumn{1}{c|}{$\pattern{scale = 0.5}{3}{1/1,2/2,3/3}{0/1,0/2,0/3,3/0,1/0,1/1,1/2,1/3,2/0,2/1,2/2,2/3,3/1,3/2,3/3}\pattern{scale = 0.5}{3}{1/3,2/2,3/1}{0/1,0/2,1/0,0/3,3/0,1/1,1/2,1/3,2/0,2/1,2/2,2/3,3/1,3/2,3/3}$} &
\footnotesize{S11} & $\pattern{scale = 0.5}{3}{1/1,2/2,3/3}{0/0,0/1,0/2,0/3,1/0,1/1,1/2,1/3,2/0,2/1,2/3,3/0,3/1,3/2,3/3}\pattern{scale = 0.5}{3}{1/3,2/2,3/1} {0/0,0/1,0/2,0/3,1/0,1/1,1/2,1/3,2/0,2/1,2/3,3/0,3/1,3/2,3/3}$ &
\footnotesize{S12} & \multicolumn{1}{c|}{$\pattern{scale = 0.5}{3}{1/1,2/2,3/3}{0/0,0/1,0/2,0/3,1/0,1/2,1/3,2/0,2/1,2/2,2/3,3/0,3/1,3/2,3/3}\pattern{scale = 0.5}{3}{1/3,2/2,3/1}{0/0,0/1,0/2,0/3,1/0,1/2,1/3,2/0,2/1,2/2,2/3,3/0,3/1,3/2,3/3}$} &\footnotesize{\multirow{3.8}{2.2cm}{Subsection~\ref{sub-dir-bijections}}}
\\[5pt]
\cline{1-8}
 \footnotesize{S13} & $\pattern{scale = 0.5}{3}{1/1,2/2,3/3}{0/0,0/1,0/2,0/3,1/0,1/1,1/3,2/0,2/2,2/3,3/0,3/1,3/2,3/3}\pattern{scale = 0.5}{3}{1/3,2/2,3/1}{0/0,0/1,0/2,0/3,1/0,1/1,1/3,2/0,2/2,2/3,3/0,3/1,3/2,3/3}$ & 
\footnotesize{S14} & \multicolumn{1}{c|}{$\pattern{scale = 0.5}{3}{1/1,2/2,3/3}{0/0,0/1,0/2,0/3,1/0,1/2,1/3,2/0,2/1,2/3,3/0,3/1,3/2,3/3}\pattern{scale = 0.5}{3}{1/3,2/2,3/1}{0/0,0/1,0/2,0/3,1/0,1/2,1/3,2/0,2/1,2/3,3/0,3/1,3/2,3/3}$}  &
\footnotesize{S15} & $\pattern{scale = 0.5}{3}{1/1,2/2,3/3}{0/0,0/1,0/2,0/3,1/0,1/1,1/3,2/0,2/3,3/0,3/1,3/2,3/3}\pattern{scale = 0.5}{3}{1/3,2/2,3/1} {0/0,0/1,0/2,0/3,1/0,1/1,1/3,2/0,2/3,3/0,3/1,3/2,3/3}$ &
\footnotesize{S16} & \multicolumn{1}{c|}{$\pattern{scale = 0.5}{3}{1/1,2/2,3/3}{0/0,0/1,0/2,0/3,1/0,1/3,2/0,2/2,2/3,3/0,3/1,3/2,3/3}\pattern{scale = 0.5}{3}{1/3,2/2,3/1}{0/0,0/1,0/2,0/3,1/0,1/3,2/0,2/2,2/3,3/0,3/1,3/2,3/3}$} & \\[5pt]
\cline{1-8}
\footnotesize{S17} & $\pattern{scale = 0.5}{3}{1/1,2/2,3/3}{0/0,0/1,0/2,0/3,1/0,1/1,1/3,2/0,2/2,2/3,3/0,3/1,3/2}\pattern{scale = 0.5}{3}{1/3,2/2,3/1} {0/0,0/1,0/2,0/3,1/0,1/1,1/3,2/0,2/2,2/3,3/0,3/1,3/2}$ &
\footnotesize{S18} & $\pattern{scale = 0.5}{3}{1/1,2/2,3/3}{0/1,0/2,0/3,1/0,1/1,1/3,2/0,2/2,2/3,3/0,3/1,3/2,3/3}\pattern{scale = 0.5}{3}{1/3,2/2,3/1}{0/1,0/2,0/3,1/0,1/1,1/3,2/0,2/2,2/3,3/0,3/1,3/2,3/3}$ &
& & & &\\[5pt]
\hline
 \footnotesize{S19} & $\pattern{scale = 0.5}{3}{1/1,2/2,3/3}{0/0,0/1,0/2,1/0,1/1,1/2,2/0,2/1,2/2}\pattern{scale = 0.5}{3}{1/3,2/2,3/1}{0/0,0/1,0/2,1/0,1/1,1/2,2/0,2/1,2/2}$  &
\footnotesize{S20} &
$\pattern{scale = 0.5}{3}{1/1,2/2,3/3}{1/1,1/2,1/3,2/1,2/2,2/3,3/1,3/2,3/3}\pattern{scale = 0.5}{3}{1/3,2/2,3/1}{1/1,1/2,1/3,2/1,2/2,2/3,3/1,3/2,3/3}$   & & & &  &\footnotesize{\multirow{1.2}{1.9cm}{Theorem~\ref{thm-pairs-19-20}}}
\\[5pt]
\hline
\footnotesize{S21} & $\pattern{scale = 0.5}{3}{1/1,2/2,3/3}{0/0,0/1,0/2,1/0,1/1,1/2,2/0,2/1,2/2,3/3}\pattern{scale = 0.5}{3}{1/3,2/2,3/1}{0/0,0/1,0/2,1/0,1/1,1/2,2/0,2/1,2/2,3/3}$  &
\footnotesize{S22} &
$\pattern{scale = 0.5}{3}{1/1,2/2,3/3}{0/0,1/1,1/2,1/3,2/1,2/2,2/3,3/1,3/2,3/3}\pattern{scale = 0.5}{3}{1/3,2/2,3/1}{0/0,1/1,1/2,1/3,2/1,2/2,2/3,3/1,3/2,3/3}$&  & & &  &\footnotesize{\multirow{1.2}{2.2cm}{Conjecture~\ref{con-pairs-21-22}}}
\\[8pt]
\hline
	\end{tabular}
\end{center} 
}
\vspace{-0.5cm}
 	\caption{Joint equidistributions for symmetric shadings. Pairs sharing the same joint equidistribution are grouped within the same frame.}\label{tab-sy-shadings}
\end{table}

\begin{table}[ht]
 	{
 		\renewcommand{\arraystretch}{1.5}
 		\setlength{\tabcolsep}{6pt}
 \begin{center} 
 		\begin{tabular}{|c c c c  c c c
 		c|c|}
 			\hline
 		\footnotesize{Nr.}	& {\footnotesize Patterns}  & \footnotesize{Nr.}	& {\footnotesize Patterns}  & \footnotesize{Nr.}	& {\footnotesize Patterns}  & \footnotesize{Nr.}	& {\footnotesize Patterns} & {\footnotesize Reference} \\
 			\hline
 			\hline

\footnotesize{A1} & $\pattern{scale = 0.5}{3}{1/1,2/2,3/3}{0/0,0/1,0/2,0/3,1/1,1/2,3/1,3/2}\pattern{scale = 0.5}{3}{1/3,2/2,3/1}{0/0,0/1,0/2,0/3,1/1,1/2,3/1,3/2}$  &
\footnotesize{A2} & $\pattern{scale = 0.5}{3}{1/1,2/2,3/3}{0/0,1/0,2/0,3/0,1/1,2/1,1/3,2/3}\pattern{scale = 0.5}{3}{1/3,2/2,3/1}{0/0,1/0,2/0,3/0,1/1,2/1,1/3,2/3}$   &
\footnotesize{A3} & $\pattern{scale = 0.5}{3}{1/1,2/2,3/3}{0/1,0/2,2/1,2/2,3/0,3/1,3/2,3/3}\pattern{scale = 0.5}{3}{1/3,2/2,3/1}{0/1,0/2,2/1,2/2,3/0,3/1,3/2,3/3}$  & 
\footnotesize{A4} & $\pattern{scale = 0.5}{3}{1/1,2/2,3/3}{1/0,1/2,2/0,2/2,0/3,1/3,2/3,3/3}\pattern{scale = 0.5}{3}{1/3
,2/2,3/1}{1/0,1/2,2/0,2/2,0/3,1/3,2/3,3/3}$  &\footnotesize{\multirow{5}{2.2cm}{Subsection~\ref{subsec-an-cr}}} \\[5pt]
\cline{1-8}
\footnotesize{A5} & $\pattern{scale = 0.5}{3}{1/1,2/2,3/3}{0/0,0/1,0/2,0/3,2/1,2/2,3/1,3/2}\pattern{scale = 0.5}{3}{1/3,2/2,3/1}{0/0,0/1,0/2,0/3,2/1,2/2,3/1,3/2}$   &
\footnotesize{A6} & $\pattern{scale = 0.5}{3}{1/1,2/2,3/3}{0/0,1/0,2/0,3/0,1/2,2/2,1/3,2/3}\pattern{scale = 0.5}{3}{1/3,2/2,3/1}{0/0,1/0,2/0,3/0,1/2,2/2,1/3,2/3}$  &
\footnotesize{A7} & $\pattern{scale = 0.5}{3}{1/1,2/2,3/3}{0/1,0/2,1/1,1/2,3/0,3/1,3/2,3/3}\pattern{scale = 0.5}{3}{1/3,2/2,3/1}{0/1,0/2,1/1,1/2,3/0,3/1,3/2,3/3}$ & 
\footnotesize{A8} & $\pattern{scale = 0.5}{3}{1/1,2/2,3/3}{1/0,1/1,2/0,2/1,0/3,1/3,2/3,3/3}\pattern{scale = 0.5}{3}{1/3,2/2,3/1}{1/0,1/1,2/0,2/1,0/3,1/3,2/3,3/3}$ &
\\[5pt]
\cline{1-8}
\footnotesize{A9} & $\pattern{scale = 0.5}{3}{1/1,2/2,3/3}{0/1,0/2,0/3,1/1,2/1,3/1,3/2,3/3}\pattern{scale = 0.5}{3}{1/3,2/2,3/1}{0/1,0/2,0/3,1/1,2/1,3/1,3/2,3/3}$ &
\footnotesize{A10} & $\pattern{scale = 0.5}{3}{1/1,2/2,3/3}{0/0,0/1,0/2,1/2,2/2,3/0,3/1,3/2}\pattern{scale = 0.5}{3}{1/3,2/2,3/1}{0/0,0/1,0/2,1/2,2/2,3/0,3/1,3/2}$  &
\footnotesize{A11} &$\pattern{scale = 0.5}{3}{1/1,2/2,3/3}{0/0,0/3,1/0,1/3,2/0,2/1,2/2,2/3}\pattern{scale = 0.5}{3}{1/3
,2/2,3/1}{0/0,0/3,1/0,1/3,2/0,2/1,2/2,2/3}$  & 
\footnotesize{A12} & $\pattern{scale = 0.5}{3}{1/1,2/2,3/3}{1/0,1/1,1/2,1/3,2/0,2/3,3/0,3/3}\pattern{scale = 0.5}{3}{1/3,2/2,3/1}{1/0,1/1,1/2,1/3,2/0,2/3,3/0,3/3}$ &
\\[5pt]
\cline{1-8}
\footnotesize{A13} & $\pattern{scale = 0.5}{3}{1/1,2/2,3/3}{0/0,0/1,0/2,1/1,2/1,3/0,3/1,3/2}\pattern{scale = 0.5}{3}{1/3,2/2,3/1}{0/0,0/1,0/2,1/1,2/1,3/0,3/1,3/2}$ &
\footnotesize{A14} & $\pattern{scale = 0.5}{3}{1/1,2/2,3/3}{0/0,0/3,1/0,1/1,1/2,1/3,2/0,2/3}\pattern{scale = 0.5}{3}{1/3,2/2,3/1}{0/0,0/3,1/0,1/1,1/2,1/3,2/0,2/3}$ &
\footnotesize{A15} & $\pattern{scale = 0.5}{3}{1/1,2/2,3/3}{0/1,0/2,0/3,1/2,2/2,3/1,3/2,3/3}\pattern{scale = 0.5}{3}{1/3
,2/2,3/1}{0/1,0/2,0/3,1/2,2/2,3/1,3/2,3/3}$ & 
\footnotesize{A16} & $\pattern{scale = 0.5}{3}{1/1,2/2,3/3}{1/0,2/0,3/0,2/1,2/2,1/3,2/3,3/3}\pattern{scale = 0.5}{3}{1/3,2/2,3/1}{1/0,2/0,3/0,2/1,2/2,1/3,2/3,3/3}$ &
\\[5pt]
			\hline
\footnotesize{A17} & $\pattern{scale = 0.5}{3}{1/1,2/2,3/3}{0/0,0/1,0/2,0/3,1/1,2/1,3/1,3/2}\pattern{scale = 0.5}{3}{1/3,2/2,3/1}{0/0,0/1,0/2,0/3,1/1,2/1,3/1,3/2}$  &
\footnotesize{A18} &  $\pattern{scale = 0.5}{3}{1/1,2/2,3/3}{0/0,0/1,0/2,0/3,1/2,2/2,3/2,3/1}\pattern{scale = 0.5}{3}{1/3,2/2,3/1}{0/0,0/1,0/2,0/3,1/2,2/2,3/2,3/1}$    &
\footnotesize{A19} & $\pattern{scale = 0.5}{3}{1/1,2/2,3/3}{0/0,1/0,2/0,3/0,1/1,1/2,1/3,2/3}\pattern{scale = 0.5}{3}{1/3,2/2,3/1}{0/0,1/0,2/0,3/0,1/1,1/2,1/3,2/3}$ & 
\footnotesize{A20} & $\pattern{scale = 0.5}{3}{1/1,2/2,3/3}{0/0,1/0,2/0,3/0,2/1,2/2,2/3,1/3}\pattern{scale = 0.5}{3}{1/3,2/2,3/1}{0/0,1/0,2/0,3/0,2/1,2/2,2/3,1/3}$ &\footnotesize{\multirow{2.5}{1.9cm}{Theorem~\ref{thm-pairs-39-47}}} \\[5pt]

\footnotesize{A21} & $\pattern{scale = 0.5}{3}{1/1,2/2,3/3}{0/1,0/2,1/1,2/1,3/1,3/0,3/2,3/3}\pattern{scale = 0.5}{3}{1/3,2/2,3/1}{0/1,0/2,1/1,2/1,3/1,3/0,3/2,3/3}$ &
\footnotesize{A22} & $\pattern{scale = 0.5}{3}{1/1,2/2,3/3}{0/1,0/2,1/2,2/2,3/2,3/1,3/0,3/2,3/3}\pattern{scale = 0.5}{3}{1/3
,2/2,3/1}{0/1,0/2,1/2,2/2,3/2,3/0,3/1,3/2,3/3}$ &
\footnotesize{A23} & $\pattern{scale = 0.5}{3}{1/1,2/2,3/3}{0/3,1/3,2/3,3/3,1/0,2/0,2/1,2/2}\pattern{scale = 0.5}{3}{1/3,2/2,3/1}{0/3,1/3,2/3,3/3,1/0,2/0,2/1,2/2}$ &
\footnotesize{A24} & $\pattern{scale = 0.5}{3}{1/1,2/2,3/3}{0/3,1/3,2/3,3/3,1/0,2/0,1/1,1/2}\pattern{scale = 0.5}{3}{1/3
,2/2,3/1}{0/3,1/3,2/3,3/3,1/0,2/0,1/1,1/2}$ &
\\[5pt]
\hline	
\footnotesize{A25} &  $\pattern{scale = 0.5}{3}{1/1,2/2,3/3}{0/0,0/1,0/2,1/1,1/2,3/0,3/1,3/2}\pattern{scale = 0.5}{3}{1/3,2/2,3/1}{0/0,0/1,0/2,1/1,1/2,3/0,3/1,3/2}$ &
\footnotesize{A26} & $\pattern{scale = 0.5}{3}{1/1,2/2,3/3}{1/0,2/0,3/0,1/1,2/1,1/3,2/3,3/3}\pattern{scale = 0.5}{3}{1/3
,2/2,3/1}{1/0,2/0,3/0,1/1,2/1,1/3,2/3,3/3}$  &
\footnotesize{A27} & $\pattern{scale = 0.5}{3}{1/1,2/2,3/3}{0/1,0/2,0/3,2/1,2/2,3/1,3/2,3/3}\pattern{scale = 0.5}{3}{1/3,2/2,3/1}{0/1,0/2,0/3,2/1,2/2,3/1,3/2,3/3}$ & 
\footnotesize{A28} & $\pattern{scale = 0.5}{3}{1/1,2/2,3/3}{0/1,0/2,0/3,1/1,1/2,3/1,3/2,3/3}\pattern{scale = 0.5}{3}{1/3,2/2,3/1}{0/1,0/2,0/3,1/1,1/2,3/1,3/2,3/3}$ & \footnotesize{\multirow{2.5}{1.9cm}{Theorem~\ref{thm-pairs-A25-32}}}\\[5pt]

\footnotesize{A29} & $\pattern{scale = 0.5}{3}{1/1,2/2,3/3}{0/0,1/0,2/0,1/2,2/2,0/3,1/3,2/3}\pattern{scale = 0.5}{3}{1/3,2/2,3/1}{0/0,1/0,2/0,1/2,2/2,0/3,1/3,2/3}$  &
\footnotesize{A30} & $\pattern{scale = 0.5}{3}{1/1,2/2,3/3}{0/0,1/0,2/0,1/1,2/1,0/3,1/3,2/3}\pattern{scale = 0.5}{3}{1/3,2/2,3/1}{0/0,1/0,2/0,1/1,2/1,0/3,1/3,2/3}$ &
\footnotesize{A31} & $\pattern{scale = 0.5}{3}{1/1,2/2,3/3}{1/0,2/0,3/0,1/2,2/2,1/3,2/3,3/3}\pattern{scale = 0.5}{3}{1/3,2/2,3/1}{1/0,2/0,3/0,1/2,2/2,1/3,2/3,3/3}$  & 
\footnotesize{A32} & $\pattern{scale = 0.5}{3}{1/1,2/2,3/3}{0/0,0/1,0/2,2/1,2/2,3/0,3/1,3/2}\pattern{scale = 0.5}{3}{1/3,2/2,3/1}{0/0,0/1,0/2,2/1,2/2,3/0,3/1,3/2}$ &
\\[5pt]
\hline
\footnotesize{A33} & $\pattern{scale = 0.5}{3}{1/1,2/2,3/3}{0/2,1/0,1/1,1/2,2/2,3/0,3/1,3/2}\pattern{scale = 0.5}{3}{1/3,2/2,3/1}{0/2,1/0,1/1,1/2,2/2,3/0,3/1,3/2}$ &
\footnotesize{A34} & $\pattern{scale = 0.5}{3}{1/1,2/2,3/3}{0/1,0/2,0/3,1/1,2/1,2/2,2/3,3/1}\pattern{scale = 0.5}{3}{1/3,2/2,3/1}{0/1,0/2,0/3,1/1,2/1,2/2,2/3,3/1}$ &
\footnotesize{A35} & $\pattern{scale = 0.5}{3}{1/1,2/2,3/3}{0/1,0/3,1/1,1/3,2/0,2/1,2/1,2/2,2/3}\pattern{scale = 0.5}{3}{1/3,2/2,3/1}{0/1,0/3,1/1,1/3,2/0,2/1,2/1,2/2,2/3}$  & 
\footnotesize{A36} & $\pattern{scale = 0.5}{3}{1/1,2/2,3/3}{1/0,1/1,1/2,1/3,2/0,2/2,3/0,3/2}\pattern{scale = 0.5}{3}{1/3,2/2,3/1}{1/0,1/1,1/2,1/3,2/0,2/2,3/0,3/2}$ & \footnotesize{\multirow{1.2}{1.9cm}{Theorem~\ref{thm-pair-A33-36}}}\\[5pt]
\hline
	\end{tabular}
\end{center} 
}
\vspace{-0.5cm}
 	\caption{Joint equidistributions for minus-antipodal shadings. Pairs sharing the same joint equidistribution are grouped within the same frame}\label{tab-anti-shadings}
\end{table}

Throughout this paper, let $q_1$ and $q_2$ denote the mesh patterns $123$ and $321$ in question, respectively, with identical shadings. Also, let $\mathcal{T}(n,k,\ell)$ be the set of all $n$-permutations containing exactly $k$ occurrences of $q_1$ and $\ell$ occurrences of $q_2$, and let 
\[T_{n,k,\ell} := |\mathcal{T}(n,k,\ell)|.
\]
Finally, denote the bivariate generating function
\[
T_n(x,y) = \sum_{k=0}^{n-2} \sum_{\ell=0}^{n-2} T_{n,k,\ell} \, x^k y^\ell,
\]
which enumerates the joint distribution of both pattern occurrences in $S_n$. 

\section{Symmetric shadings}\label{sec-sy}

In this section, we consider 20 jointly equidistributed pairs and two Wilf-equivalent pairs of patterns with symmetric shadings, as shown in Table~\ref{tab-sy-shadings}. In Sects.~\ref{sub-sy-cr} and~\ref{sub-dir-bijections}, we obtain the joint equidistribution results by constructing bijections. 
For each pair of patterns, we use $f$ to denote the bijection that swaps the occurrences of the patterns in question. 
In Sect.~\ref{sub-recur-relat},  we get two jointly equidistributed results via recurrence relations. 
In Sect.\ref{sub-Wilf}, we conjecture two pairs of joint equidistribution and prove their Wilf-equivalence by establishing a bijection. 

\subsection{Results obtained by complement or reverse}\label{sub-sy-cr}

In this subsection, we consider the following eight pairs:\vspace{0.1cm}
\begin{center}
\begin{tabular}{rrrrr}
S1 = $\pattern{scale = 0.4}{3}{1/1,2/2,3/3}{0/0,0/1,0/2,0/3,1/0,1/1,1/2,1/3,2/0,2/1,2/2,2/3,3/0,3/1,3/2,3/3}\pattern{scale = 0.4}{3}{1/3,2/2,3/1}{0/0,0/1,0/2,0/3,1/0,1/1,1/2,1/3,2/0,2/1,2/2,2/3,3/0,3/1,3/2,3/3} $; & \hspace{-0.3cm}
S2 = $\pattern{scale = 0.4}{3}{1/1,2/2,3/3}{0/0,0/1,0/2,0/3,1/0,1/3,2/0,2/3,3/0,3/1,3/2,3/3}\pattern{scale = 0.4}{3}{1/3,2/2,3/1}{0/0,0/1,0/2,0/3,1/0,1/3,2/0,2/3,3/0,3/1,3/2,3/3}$;   &\hspace{-0.3cm}
S3 = $\pattern{scale = 0.4}{3}{1/1,2/2,3/3}{0/0,0/3,1/1,1/2,2/1,2/2,3/0,3/3}\pattern{scale = 0.4}{3}{1/3,2/2,3/1}{0/0,0/3,1/1,1/2,2/1,2/2,3/0,3/3}$; & \hspace{-0.3cm}
S4 = $\pattern{scale = 0.4}{3}{1/1,2/2,3/3}{0/0,0/3,3/0,3/3}\pattern{scale = 0.4}{3}{1/3,2/2,3/1}{0/0,0/3,3/0,3/3}$; \\[5pt]
S5 = $\pattern{scale = 0.4}{3}{1/1,2/2,3/3}{0/1,0/2,1/0,1/1,1/2,1/3,2/0,2/1,2/2,2/3,3/1,3/2}\pattern{scale = 0.4}{3}{1/3,2/2,3/1}{0/1,0/2,1/0,1/1,1/2,1/3,2/0,2/1,2/2,2/3,3/1,3/2}$; &\hspace{-0.3cm}
S6 = $\pattern{scale = 0.4}{3}{1/1,2/2,3/3}{0/1,0/2,1/0,1/3,2/0,2/3,3/1,3/2}\pattern{scale = 0.4}{3}{1/3
,2/2,3/1}{0/1,0/2,1/0,1/3,2/0,2/3,3/1,3/2}$; &\hspace{-0.3cm}
S7 = $\pattern{scale = 0.4}{3}{1/1,2/2,3/3}{1/1,1/2,2/1,2/2}\pattern{scale = 0.4}{3}{1/3,2/2,3/1}{1/1,1/2,2/1,2/2}$;& \hspace{-0.3cm}
S8 = $\pattern{scale = 0.4}{3}{1/1,2/2,3/3}{}\pattern{scale = 0.4}{3}{1/3,2/2,3/1}{}$.
\end{tabular}
\end{center} 

Recall that the {\em complement, reverve, and inverse} of $\pi=\pi_1\pi_2\cdots \pi_n \in S_n$, are respectively defined as $\pi^c= (n + 1 - \pi_1)(n + 1 - \pi_2)\cdots (n + 1-\pi_n)$,  $\pi^r=\pi_n\pi_{n-1}\cdots\pi_1$, and $\pi^i=\pi^i_1\pi^i_2\cdots\pi^i_n$ such that $\pi^i_j=\pi_m$ if $\pi_j=m$, for any $1\leq m \leq n$. Given a mesh pattern $(\tau,R)$, the corresponding operations are defined as
\[(\tau,R)^c=(\tau^c,R^c),\hspace{0.5cm}(\tau,R)^r=(\tau^r,R^r),\hspace{0.5cm}(\tau,R)^i=(\tau^i,R^i),
\]
where 
\[
R^c=\{(x,n-y): (x,y)\in R\},
\]
\[
R^r=\{(n-x,y): (x,y)\in R\},
\]
\[
R^i=\{(y,x): (x,y)\in R\}.
\]
For instance, for the mesh pattern $q=(123,\{\boks{0}{0},\boks{1}{2},\boks{2}{1},\boks{3}{1}\})$, the specific operations are detailed as follows:
\[
\pattern{scale=0.8}{3}{1/1,2/2,3/3}{0/0,1/2,2/1,3/1}\xrightarrow{c}\pattern{scale=0.8}{3}{1/3,2/2,3/1}{0/3,1/1,2/2,3/2}\xrightarrow{r}\pattern{scale=0.8}{3}{1/1,2/2,3/3}{0/2,1/2,2/1,3/3}\xrightarrow{i}\pattern{scale=0.8}{3}{1/1,2/2,3/3}{2/0,1/2,2/1,3/3}.
\]

Notice  that the shadings of these eight pairs exhibit symmetry both horizontally and vertically. 
This enables us to use the complement (or reverse) operation. 
To be precise, we let $f(\pi)=\pi^c$ (or $f(\pi)=\pi^r$) for any $\pi \in S_n$, which exchanges the occurrences of each pair of patterns.

\subsection{Results obtained by swapping two elements}\label{sub-dir-bijections}
In this subsection, we study the following ten pairs:\vspace{0.1cm}
\begin{center}
\begin{tabular}{rrrrr}
S9 = $\pattern{scale = 0.4}{3}{1/1,2/2,3/3}{0/0,0/1,0/2,0/3,1/0,1/1,1/2,1/3,2/0,2/1,2/2,2/3,3/0,3/1,3/2}\pattern{scale = 0.4}{3}{1/3,2/2,3/1}{0/0,0/1,0/2,0/3,1/0,1/1,1/2,1/3,2/0,2/1,2/2,2/3,3/0,3/1,3/2}$; & \hspace{-0.3cm}
S10 = $\pattern{scale = 0.4}{3}{1/1,2/2,3/3}{0/1,0/2,0/3,3/0,1/0,1/1,1/2,1/3,2/0,2/1,2/2,2/3,3/1,3/2,3/3}\pattern{scale = 0.4}{3}{1/3,2/2,3/1}{0/1,0/2,1/0,0/3,3/0,1/1,1/2,1/3,2/0,2/1,2/2,2/3,3/1,3/2,3/3}$;&\hspace{-0.3cm}
S11 = $\pattern{scale = 0.4}{3}{1/1,2/2,3/3}{0/0,0/1,0/2,0/3,1/0,1/1,1/2,1/3,2/0,2/1,2/3,3/0,3/1,3/2,3/3}\pattern{scale = 0.4}{3}{1/3,2/2,3/1} {0/0,0/1,0/2,0/3,1/0,1/1,1/2,1/3,2/0,2/1,2/3,3/0,3/1,3/2,3/3}$;&\hspace{-0.3cm}
S12 = $\pattern{scale = 0.4}{3}{1/1,2/2,3/3}{0/0,0/1,0/2,0/3,1/0,1/2,1/3,2/0,2/1,2/2,2/3,3/0,3/1,3/2,3/3}\pattern{scale = 0.4}{3}{1/3,2/2,3/1}{0/0,0/1,0/2,0/3,1/0,1/2,1/3,2/0,2/1,2/2,2/3,3/0,3/1,3/2,3/3}$; &\hspace{-0.3cm}
S13 = $\pattern{scale = 0.4}{3}{1/1,2/2,3/3}{0/0,0/1,0/2,0/3,1/0,1/1,1/3,2/0,2/2,2/3,3/0,3/1,3/2,3/3}\pattern{scale = 0.4}{3}{1/3,2/2,3/1}{0/0,0/1,0/2,0/3,1/0,1/1,1/3,2/0,2/2,2/3,3/0,3/1,3/2,3/3}$; \\[5pt] 
S14 = $\pattern{scale = 0.4}{3}{1/1,2/2,3/3}{0/0,0/1,0/2,0/3,1/0,1/2,1/3,2/0,2/1,2/3,3/0,3/1,3/2,3/3}\pattern{scale = 0.4}{3}{1/3,2/2,3/1}{0/0,0/1,0/2,0/3,1/0,1/2,1/3,2/0,2/1,2/3,3/0,3/1,3/2,3/3}$;&\hspace{-0.3cm}
S15 = $\pattern{scale = 0.4}{3}{1/1,2/2,3/3}{0/0,0/1,0/2,0/3,1/0,1/1,1/3,2/0,2/3,3/0,3/1,3/2,3/3}\pattern{scale = 0.4}{3}{1/3,2/2,3/1} {0/0,0/1,0/2,0/3,1/0,1/1,1/3,2/0,2/3,3/0,3/1,3/2,3/3}$; &\hspace{-0.3cm}
~S16 = $\pattern{scale = 0.4}{3}{1/1,2/2,3/3}{0/0,0/1,0/2,0/3,1/0,1/3,2/0,2/2,2/3,3/0,3/1,3/2,3/3}\pattern{scale = 0.4}{3}{1/3,2/2,3/1}{0/0,0/1,0/2,0/3,1/0,1/3,2/0,2/2,2/3,3/0,3/1,3/2,3/3}$; &\hspace{-0.3cm}
S17 = $\pattern{scale = 0.4}{3}{1/1,2/2,3/3}{0/0,0/1,0/2,0/3,1/0,1/1,1/3,2/0,2/2,2/3,3/0,3/1,3/2}\pattern{scale = 0.4}{3}{1/3,2/2,3/1} {0/0,0/1,0/2,0/3,1/0,1/1,1/3,2/0,2/2,2/3,3/0,3/1,3/2}$; &\hspace{-0.3cm}
S18 = $\pattern{scale = 0.4}{3}{1/1,2/2,3/3}{0/1,0/2,0/3,1/0,1/1,1/3,2/0,2/2,2/3,3/0,3/1,3/2,3/3}\pattern{scale = 0.4}{3}{1/3,2/2,3/1}{0/1,0/2,0/3,1/0,1/1,1/3,2/0,2/2,2/3,3/0,3/1,3/2,3/3}$.
\end{tabular}
\end{center}

Observe that for each of these 10 pairs, any permutation may contain occurrences of either $q_1$ or $q_2$, but never both simultaneously. Moreover, by applying complement and reverse operations to the patterns,
Nr.~S$9$, S$11$, S$13$, S$15$, and S$17$ have the same joint equidistribution with Nr.~S$10$, S$12$, S$14$, S$16$, and S$18$, respectively. 
We shall prove the joint equidistributions of the first half of these patterns by swapping 1 and another element in permutations.

\noindent 
{\footnotesize $\bullet$} For Nr.~S9, the unique occurrence of the pattern $q_1$ (resp.,   $q_2$) occurs in $ \pi \in S_n$ if and only if $\pi=123\pi_4\cdots \pi_n$ (resp.,   $321\pi_4\cdots\pi_n)$ for $n\geq 3$; otherwise $\pi$ avoids the patterns. Thus, let 
\[
f(\pi)= \left\{  
\begin{array}{ll}  
321\pi_4\cdots\pi_n, & \text{ } \pi = 123\pi_4\cdots\pi_n, \\
123\pi_4\cdots\pi_n, & \text{ } \pi = 321\pi_4\cdots\pi_n, \\
\pi, & \text{otherwise.}
\end{array}  
\right.
\]
\noindent
{\footnotesize $\bullet$} For Nr.~S11, the unique occurrence of the pattern $q_1$ (resp.,   $q_2$) occurs in $ \pi \in S_n$  if and only if $\pi=12\pi_3\cdots \pi_{n-1} n$ (resp.,   $n2 \pi_3\cdots \pi_{n-1} 1)$ for $n\geq 3$; otherwise, $\pi$ avoids the patterns. Thus, let
\[
f(\pi)= \left\{  
\begin{array}{ll}  
n2 \pi_3\cdots \pi_{n-1} 1, & \text{} \pi =12\pi_3\cdots \pi_{n-1} n, \\
12\pi_3\cdots \pi_{n-1} n, & \text{} \pi = n2 \pi_3\cdots \pi_{n-1} 1, \\
\pi, & \text{otherwise.}
\end{array}  
\right.
\]

\noindent
{\footnotesize $\bullet$} For Nr.~S13, the pattern $q_1$ (resp.,    $q_2$) occurs in $ \pi=\pi_1\pi_2\cdots \pi_n \in S_n$ if and only if $\pi_1=1$ and $\pi_n=n$ (resp.,   $\pi_1=n$ and $\pi_n=1$). Under this condition, the number of occurrences of $q_1$ or $q_2$ in $\pi$ is equal to the number of occurrences of the pattern $\pattern{scale = 0.5}{1}{1/1}{0/0,1/1}$ in the sequence $\pi_2\cdots \pi_{n-1}$. 
Hence, we let
\[
f(\pi)= \left\{  
\begin{array}{ll}  
n\pi_2 \cdots \pi_{n-1}1, & \text{} \pi =1\pi_2 \cdots \pi_{n-1} n, \\
1\pi_2 \cdots \pi_{n-1} n, & \text{} \pi = n\pi_2 \cdots  \pi_{n-1} 1, \\
\pi, & \text{otherwise.}
\end{array}  
\right.
\]
 \\[-3mm]


\noindent
{\footnotesize $\bullet$} For Nr.~S15,  the only difference  with Nr.~S13 is that the corresponding pattern in the sequence $\pi_2\cdots\pi_{n-1}$, which was $\pattern{scale = 0.5}{1}{1/1}{0/0,1/1}$ in Nr.~S13, is replaced by $\pattern{scale = 0.5}{1}{1/1}{0/0}$. 
Thus, we adopt the same map $f$ as in Nr.~S13. 

\noindent
{\footnotesize $\bullet$}  For Nr.~S17,  the structure of a permutation $\pi=\pi_1\pi_2\cdots \pi_n \in S_n$ with $k$ occurrences of $q_1$ (resp., $q_2$)  is shown on the left side (resp., right side) of Fig.~\ref{Nr.15-fig}. Namely, $\pi_1\pi_{j_1}\pi_t$, $\pi_1\pi_{j_2}\pi_t$, $\ldots$, and $\pi_1\pi_{j_k}\pi_t$ are $k$ occurrences of $q_1$ (resp., $q_2$). 
Clearly, $\pi_1 = 1$ for $q_1$ and $\pi_t = 1$ for $q_2$. Additionally, elements in the  area $B$ are not involved in any occurrence of these two patterns; otherwise, some $\pi_{j_i}'s$ (for $1 \leq i \leq k$) would be in the shaded areas. Hence, we let
\[
f(\pi)= \left\{  
\begin{array}{ll}  
\pi_t\cdots \pi_{i-1} 1 \pi_{t+1}\cdots\pi_n, & \text{} \pi =1\pi_2\cdots\pi_t \pi_{t+1} \cdots\pi_n, \\
1\pi_2\cdots\pi_{t-1} \pi_1 \pi_{t+1} \cdots\pi_n, & \text{} \pi = \pi_1 \pi_2 \cdots \pi_{t-1} 1 \pi_{t+1} \cdots\pi_n, \\
\pi, & \text{otherwise.}
\end{array}  
\right.
\]

 
\begin{figure}  
\begin{center}  
\begin{tabular}{ccc}
\begin{tikzpicture}[scale=0.7]

\tikzset{      
  grid/.style={        
    draw,        
    step=1cm,        
    gray!100,    
    thin,        
  },   
  cell/.style={      
    draw,      
    anchor=center,    
    text centered,    
  },    
  graycell/.style={   
    fill=gray!20,
    draw=none,     
    minimum width=1cm,   
    minimum height=1cm,     
    anchor=south west,            
  }    
}
      
 \fill[graycell] (0,0) rectangle (1,4);
 \fill[graycell] (0,5) rectangle (5,6);
 \fill[graycell] (1,0) rectangle (2,3);
 \fill[graycell] (1,4) rectangle (6,5);
 \fill[graycell] (2,0) rectangle (3,2);
 \fill[graycell] (2,3) rectangle (6,4);
 \fill[graycell] (3,0) rectangle (4,1);
 \fill[graycell] (3,2) rectangle (6,3);
 \fill[graycell] (4,1) rectangle (5,2);
 \fill[graycell] (5,0) rectangle (6,5);

\draw[grid] (0,0) grid (6,6);

  \node at (0.5,4.5) {\scriptsize$A_1$};
  \node at (1.5,3.5) {\scriptsize$A_2$};
  \node at (4.5,0.5) {\scriptsize$A_{k+1}$};
  \node at (5.5,5.5) {\scriptsize$B$};
  
  \node[anchor=center] at (0,-0.3) {\footnotesize{$\pi_1\vspace{0.2cm}=1$}}; 
  \node[anchor=east] at (1,3.7) {\footnotesize{$\pi_{j_1}$}}; 
  \node[anchor=east] at (2,2.7) {\footnotesize{$\pi_{j_2}$}}; 
  \node[anchor=east] at (4,0.7) {\footnotesize{$\pi_{j_k}$}}; 
  \node[anchor=east] at (5,4.7) {\footnotesize{$\pi_t$}}; 
 \node[anchor=center, rotate=-45] at (2.5,2.5) {$\ldots$};
 \node[anchor=center, rotate=-45] at (3.5,1.5) {$\ldots$};
 \draw[thick] (0,0) circle (5pt);
  \draw[thick] (0,0) circle (2pt);
  \filldraw[black] (1,4) circle (2.5pt); 
  \filldraw[black] (2,3) circle (2.5pt); 
  \filldraw[black] (4,1) circle (2.5pt); 
 \draw[thick] (5,5) circle (5pt);
  \draw[thick] (5,5) circle (2pt); 
 

\end{tikzpicture} 

	& \ & 
	
\begin{tikzpicture}[scale=0.7]
\tikzset{      
  grid/.style={        
    draw,        
    step=1cm,        
    gray!100,    
    thin,        
  },   
  cell/.style={      
    draw,      
    anchor=center,    
    text centered,    
  },    
  graycell/.style={   
    fill=gray!20,
    draw=none,     
    minimum width=1cm,   
    minimum height=1cm,     
    anchor=south west,            
  },  
  whitecell/.style={   
    fill=white,
    draw=none,     
    minimum width=1cm,   
    minimum height=1cm,     
    anchor=south west,            
  }    
}
      
\fill[graycell] (0,0) rectangle (1,4);
 \fill[graycell] (0,5) rectangle (5,6);
 \fill[graycell] (1,0) rectangle (2,3);
 \fill[graycell] (1,4) rectangle (6,5);
 \fill[graycell] (2,0) rectangle (3,2);
 \fill[graycell] (2,3) rectangle (6,4);
 \fill[graycell] (3,0) rectangle (4,1);
 \fill[graycell] (3,2) rectangle (6,3);
 \fill[graycell] (4,1) rectangle (5,2);
 \fill[graycell] (5,0) rectangle (6,5);

\draw[grid] (0,0) grid (6,6);

  \node at (0.5,4.5) {\scriptsize$A_1$};
  \node at (1.5,3.5) {\scriptsize$A_2$};
  \node at (4.5,0.5) {\scriptsize$A_{k+1}$};
  \node at (5.5,5.5) {\scriptsize$B$};
  
  \node[anchor=east] at (0,4.7) {\footnotesize{$\pi_1$}}; 
  \node[anchor=east] at (1,3.7) {\footnotesize{$\pi_{j_1}$}}; 
  \node[anchor=east] at (2,2.7) {\footnotesize{$\pi_{j_2}$}}; 
  \node[anchor=east] at (4,0.7) {\footnotesize{$\pi_{j_k}$}}; 
  \node[anchor=center] at (5,-0.3) {\footnotesize{$\pi_t=1$}}; 
 \node[anchor=center, rotate=-45] at (2.5,2.5) {$\ldots$};
 \node[anchor=center, rotate=-45] at (3.5,1.5) {$\ldots$};
   \draw[thick] (0,5) circle (5pt);
  \draw[thick] (0,5) circle (2pt);
  \filldraw[black] (1,4) circle (2.5pt); 
  \filldraw[black] (2,3) circle (2.5pt); 
  \filldraw[black] (4,1) circle (2.5pt); 
  \draw[thick] (5,0) circle (5pt);
  \draw[thick] (5,0) circle (2pt);

\end{tikzpicture} 
 \end{tabular}
\vspace{0.3cm}

\end{center}
\caption{The structure of Nr.~17}\label{Nr.15-fig}
\end{figure}
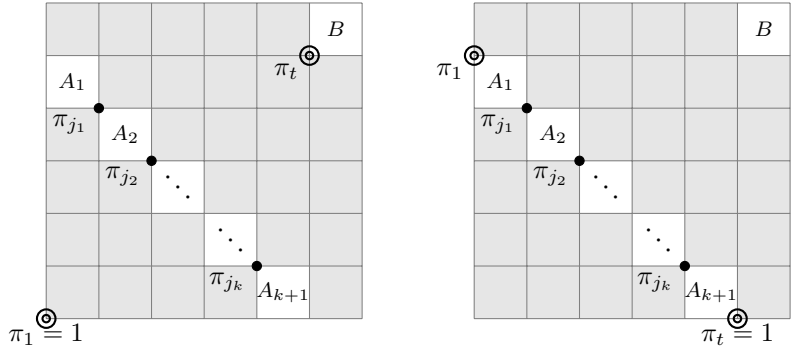

\subsection{Results proved via recurrence relations}\label{sub-recur-relat}
In this subsection, we consider the following two pairs:
\vspace{0.1cm}
\begin{center}
\begin{tabular}{rrrr}
S19 = $\pattern{scale = 0.4}{3}{1/1,2/2,3/3}{0/0,0/1,0/2,1/0,1/1,1/2,2/0,2/1,2/2}\pattern{scale = 0.4}{3}{1/3,2/2,3/1}{0/0,0/1,0/2,1/0,1/1,1/2,2/0,2/1,2/2}$;&\hspace{-0.3cm}
S20 = $\pattern{scale = 0.4}{3}{1/1,2/2,3/3}{1/1,1/2,1/3,2/1,2/2,2/3,3/1,3/2,3/3}\pattern{scale = 0.4}{3}{1/3,2/2,3/1}{1/1,1/2,1/3,2/1,2/2,2/3,3/1,3/2,3/3}$.
\end{tabular}
\end{center}

\begin{thm}\label{thm-pairs-19-20}
We have $\pattern{scale = 0.5}{3}{1/1,2/2,3/3}{0/0,0/1,0/2,1/0,1/1,1/2,2/0,2/1,2/2}\sim_{jd}\hspace{-1mm}\pattern{scale = 0.5}{3}{1/3,2/2,3/1}{0/0,0/1,0/2,1/0,1/1,1/2,2/0,2/1,2/2}$ and  $\hspace{-1mm}\pattern{scale = 0.5}{3}{1/1,2/2,3/3}{1/1,1/2,1/3,2/1,2/2,2/3,3/1,3/2,3/3}\sim_{jd}\hspace{-1mm}\pattern{scale = 0.5}{3}{1/3,2/2,3/1}{1/1,1/2,1/3,2/1,2/2,2/3,3/1,3/2,3/3}$
(Nr.~S19 and S20). 
\end{thm}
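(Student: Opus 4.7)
The plan is to prove joint equidistribution for S19 via a recurrence, and to deduce the S20 result by a symmetry argument. For S20, the $180^{\circ}$ rotation $\pi \mapsto (\pi^c)^r$ on $S_n$ fixes each of $123$ and $321$ as classical patterns (since $(123)^{cr} = 123$) while carrying the S19 shading $\dbrac{0,2}\times\dbrac{0,2}$ to the S20 shading $\dbrac{1,3}\times\dbrac{1,3}$. This rotation therefore gives a joint-distribution-preserving bijection, so $T_n^{\mathrm{S20}}(x,y) = T_n^{\mathrm{S19}}(x,y)$ and it suffices to handle S19.

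For S19, I would first record a convenient description of occurrences. Given $\pi \in S_n$ and $i \in \{3,\ldots,n\}$, let $q_1(i) < q_2(i) < q_3(i)$ denote the three smallest values among $\pi_1,\ldots,\pi_i$, sitting at positions $p_1(i), p_2(i), p_3(i)$, respectively. Unwinding the S19 shading then yields: $\pi$ contains an occurrence of the S19 pattern $q_1$ whose largest element sits at position $i$ if and only if $p_3(i) = i$ and $p_1(i) < p_2(i)$, and an occurrence of $q_2$ whose smallest element sits at position $i$ if and only if $p_1(i) = i$ and $p_3(i) < p_2(i)$.

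The heart of the argument is an insertion analysis. Given $\pi' \in S_{n-1}$ and $m \in \{1,\ldots,n\}$, let $\pi \in S_n$ be obtained by inserting the value $n$ at position $m$ of $\pi'$. Since $n$ is never among the three smallest of any prefix of length $\geq 4$, a position-by-position comparison (using the characterization above) shows that every occurrence at some position $i \geq 4$ in $\pi$ is in natural bijection with the corresponding occurrence in $\pi'$, and that the only possibility of a new occurrence lies at $i = 3$: inserting $n$ at $m = 3$ with $\pi'_1 < \pi'_2$ produces one extra occurrence of $q_1$, and inserting at $m = 1$ with $\pi'_1 > \pi'_2$ produces one extra occurrence of $q_2$. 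Confirming that no other insertion creates a new occurrence---in particular that the mixed cases $m=1$ with $\pi'_1 < \pi'_2$, and $m=3$ with $\pi'_1 > \pi'_2$, as well as the case $m = 2$, contribute nothing new---is the delicate combinatorial step I expect to be the main obstacle.

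Partitioning $S_n$ by the sign of $\pi_2 - \pi_1$ and defining
\[
T_n^{<}(x,y) := \sum_{\substack{\pi \in S_n \\ \pi_1 < \pi_2}} x^{\mathrm{occ}_{q_1}(\pi)}\, y^{\mathrm{occ}_{q_2}(\pi)}, \qquad
T_n^{>}(x,y) := \sum_{\substack{\pi \in S_n \\ \pi_1 > \pi_2}} x^{\mathrm{occ}_{q_1}(\pi)}\, y^{\mathrm{occ}_{q_2}(\pi)},
\]
and noting that $\pi_1 < \pi_2$ in $\pi$ holds precisely when $m = 2$ or $m \geq 3$ with $\pi'_1 < \pi'_2$, the insertion analysis translates into the coupled recurrences
\begin{align*}
T_n^{<}(x,y) &= T_{n-1}^{>}(x,y) + (x + n - 2)\, T_{n-1}^{<}(x,y),\\
T_n^{>}(x,y) &= T_{n-1}^{<}(x,y) + (y + n - 2)\, T_{n-1}^{>}(x,y).
\end{align*}
Starting from $T_2^{<}(x,y) = T_2^{>}(x,y) = 1$, induction on $n$ directly establishes the stronger symmetry $T_n^{<}(x,y) = T_n^{>}(y,x)$, and summing then gives $T_n(x,y) = T_n^{<}(x,y) + T_n^{>}(x,y) = T_n^{>}(y,x) + T_n^{<}(y,x) = T_n(y,x)$, which is the desired joint equidistribution.
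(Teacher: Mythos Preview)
Your proposal is correct and follows essentially the same approach as the paper. Both arguments reduce S20 to S19 via the rotation $\pi\mapsto(\pi^c)^r$, split $S_n$ according to the sign of $\pi_2-\pi_1$, derive the identical coupled recurrences by inserting the value $n$ (your $T_n^{<},T_n^{>}$ are the paper's $T^{(2)}_n,T^{(1)}_n$ in generating-function form), and finish by induction to obtain $T_n^{<}(x,y)=T_n^{>}(y,x)$; the only cosmetic difference is that you encode occurrences via the three smallest values of a prefix, whereas the paper phrases the same information in terms of left-to-right minima.
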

\begin{proof}
 
It suffices to prove the joint equidistribution of Nr.~S19, since Nr.~S20 can be derived from Nr.~S19 via complement and reverse operations as follows:
\[
\pattern{scale = 0.5}{3}{1/1,2/2,3/3}{0/0,0/1,0/2,1/0,1/1,1/2,2/1,2/2,2/0}\xrightarrow{c}\pattern{scale = 0.5}{3}{1/3,2/2,3/1}{0/3,0/1,0/2,1/3,1/1,1/2,2/1,2/2,2/3}\xrightarrow{r}\pattern{scale = 0.5}{3}{1/1,2/2,3/3}{3/3,3/1,3
/2,1/3,1/1,1/2,2/1,2/2,2/3}, \hspace{0.5cm}\pattern{scale = 0.5}{3}{1/3,2/2,3/1}{0/0,0/1,0/2,1/0,1/1,1/2,2/1,2/2,2/0}\xrightarrow{c}\pattern{scale = 0.5}{3}{1/1,2/2,3/3}{0/3,0/1,0/2,1/3,1/1,1/2,2/1,2/2,2/3}\xrightarrow{r}\pattern{scale = 0.5}{3}{1/3,2/2,3/1}{3/3,3/1,3
/2,1/3,1/1,1/2,2/1,2/2,2/3}.
\]

\begin{figure}  
\begin{center}  
\begin{tabular}{ccc}
\begin{tikzpicture}[scale=0.7]

\tikzset{      
   grid/.style={        
      draw,        
      step=1cm,        
      gray!100,    
      thin,        
    },   
    cell/.style={      
      draw,      
      anchor=center,    
      text centered,    
     },    
    graycell/.style={   
      fill=gray!20,
      draw=none,     
      minimum width=1cm,   
      minimum height=1cm,     
      anchor=south west,            
    }    
  } 
   \fill[graycell] (0,0) rectangle (1,7);
  \fill[graycell] (1,0) rectangle (2,6);
  \fill[graycell] (2,0) rectangle (3,5);
  \fill[graycell] (3,0) rectangle (4,3);
  \fill[graycell] (4,0) rectangle (5,3);
  \fill[graycell] (5,0) rectangle (6,2);
  \fill[graycell] (6,0) rectangle (7,1);

 \draw[grid] (0,0) grid (8,8);  

\fill[white] (3,3) rectangle (5,5);
\fill[graycell] (3,3) rectangle (4.4,4.4);
\draw[draw=gray!60, line width=1pt] (3,3)--(3,5)--(5,5)--(5,3)--(3,3) ;
  \node at (0.5,7.5) {\scriptsize$A_1$};  
  \node at (1.5,6.5) {\scriptsize$A_2$};  
  \node at (4.5,3.5) {\scriptsize$A_i$};  
  \node at (7.5,0.5) {\scriptsize$A_s$};
  \node[anchor=east] at (0,6.7) {\footnotesize{$x_1$}}; 
  \node[anchor=east] at (1,5.7) {\footnotesize{$x_2$}}; 
  \node[anchor=east] at (3,2.7) {\footnotesize{$x_i$}}; 
  \node[anchor=center] at (3.3,3.9) {\footnotesize{\scriptsize$a$}}; 
  \node[anchor=center] at (4.5,4.8) {\footnotesize{\scriptsize$b$}}; 
  \node[anchor=center] at (4.5,1.7) {\footnotesize{\scriptsize$x_{i+1}$}}; 
  \node[anchor=west] at (7,-0.3) {\footnotesize{$x_s=1$}}; 
 
  \filldraw[black] (0,7) circle (2.5pt);
   \filldraw[black] (1,6) circle (2.5pt);
   \draw[thick] (3,3) circle (5pt);
   \draw[thick] (3,3) circle (2pt); 
  \draw[thick] (3.6,3.6) circle (5pt);
  \draw[thick] (3.6,3.6) circle (2pt);
  \draw[thick] (4.4,4.4) circle (5pt);
  \draw[thick] (4.4,4.4) circle (2pt);
  \filldraw[black] (5,2) circle (2.5pt); 
  \filldraw[black] (7,0) circle (2.5pt);  
 \node[anchor=center, rotate=-45] at (2.5,5.5) {$\ldots$};
 \node[anchor=center, rotate=-45] at (6.5,1.5) {$\ldots$};
\end{tikzpicture} 
	& \ & 
\begin{tikzpicture}[scale=0.7]

\tikzset{      
   grid/.style={        
      draw,        
      step=1cm,        
      gray!100,    
      thin,        
    },   
    cell/.style={      
      draw,      
      anchor=center,    
      text centered,    
     },    
    graycell/.style={   
      fill=gray!20,
      draw=none,     
      minimum width=1cm,   
      minimum height=1cm,     
      anchor=south west,            
    }    
  } 
      
  \fill[graycell] (0,0) rectangle (1,7);
  \fill[graycell] (1,0) rectangle (2,6);
  \fill[graycell] (2,0) rectangle (3,5);
  \fill[graycell] (3,0) rectangle (4,4);
  \fill[graycell] (4,0) rectangle (5,4);
  \fill[graycell] (5,0) rectangle (6,2);
  \fill[graycell] (6,0) rectangle (7,1);

 \draw[grid] (0,0) grid (8,8);  

  \node at (1.5,6.5) {\scriptsize$B_1$};  
  \node at (4.5,3.5) {\scriptsize$B_i$};  
  
  \node[anchor=east] at (0,6.7) {\footnotesize{$x_1$}}; 
  \node[anchor=east] at (1,5.7) {\footnotesize{$x_2$}}; 
  \node[anchor=east] at (3,3.7) {\footnotesize{$x_i$}}; 
  \node[anchor=center] at (3.5,2.7) {\footnotesize{\scriptsize$x_{i+1}$}}; 
  \node[anchor=center] at (4.5,1.7) {\footnotesize{\scriptsize$x_{i+2}$}}; 
  \node[anchor=west] at (7,-0.3) {\footnotesize{$x_s=1$}}; 
 
  \filldraw[black] (0,7) circle (2.5pt);
   \filldraw[black] (1,6) circle (2.5pt);
   \draw[thick] (3,4) circle (5pt);
  \draw[thick] (3,4) circle (2pt);
   
  \draw[thick] (4,3) circle (5pt);
  \draw[thick] (4,3) circle (2pt);
  \draw[thick] (5,2) circle (5pt);
  \draw[thick] (5,2) circle (2pt);
  
  \filldraw[black] (7,0) circle (2.5pt);  
 \node[anchor=center, rotate=-45] at (2.5,5.5) {$\ldots$};
 \node[anchor=center, rotate=-45] at (6.5,1.5) {$\ldots$};
\end{tikzpicture} 
 \end{tabular}
\vspace{-0.5cm}
\end{center}
\caption{The structure of $n$-permutations containing occurrences of $q_1$ and $q_2$ in Nr.~S19, where the $x_i$'s represent the left-to-right minima in permutations.}\label{bo-fig}
\end{figure}
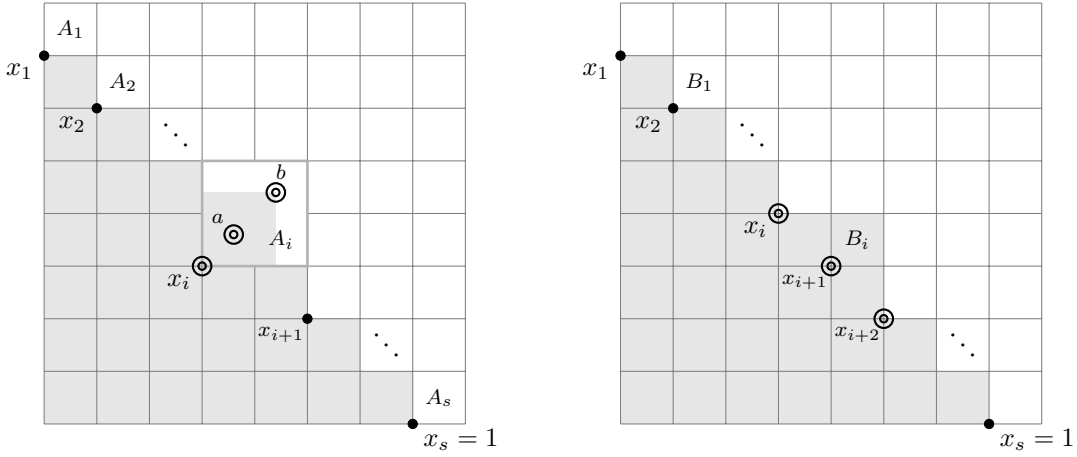
\noindent Here, we let
\[q_1=\pattern{scale = 0.5}{3}{1/1,2/2,3/3}{0/0,0/1,0/2,1/0,1/1,1/2,2/0,2/1,2/2},\hspace{0.6cm}q_2=\pattern{scale = 0.5}{3}{1/3,2/2,3/1}{0/0,0/1,0/2,1/0,1/1,1/2,2/0,2/1,2/2}.
\]

Recall that for a  permutation $\pi = \pi_1 \cdots \pi_n \in S_n$, a letter $\pi_i$ is said to be  a {\em left-to-right minimum} if $\pi_i < \mbox{min}(\pi_1 \cdots \pi_{i-1})$. In particular, $\pi_1$ is always a left-to-right minimum in $\pi$. For instance, in the permutation $\pi=45123$, elements 4 and 1 are left-to-right minima. We label the slots from left to right with the $(i+1)$-th slot located between $\pi_{i}$ and $\pi_{i+1}$. In particular, the first slot (resp., the last slot) refers to the position immediately before the first entry (resp., after the last entry). Denote
\[
T^{(1)}_{n,k,\ell}:=\# \{\pi \in \A(n,k,\ell):  \pi_1>\pi_2 \},
\]
\[
T^{(2)}_{n,k,\ell}:=\# \{\pi \in \A(n,k,\ell): \pi_1<\pi_2 \}.
\]
It is clear that 
\begin{align}\label{HK12}
T_{n,k,\ell}=T^{(1)}_{n,k,\ell}+T^{(2)}_{n,k,\ell}.
\end{align}
To prove $T_{n,k,\ell}=T_{n,\ell,k}$, we shall show
\[
T^{(1)}_{n,k,\ell}=T^{(2)}_{n,\ell,k}\text{ and } 
T^{(2)}_{n,k,\ell}=T^{(1)}_{n,\ell,k}.
\]
 We claim that
\begin{align}
T^{(1)}_{n,k,\ell}&=(n-2)T^{(1)}_{n-1,k,\ell}+T^{(1)}_{n-1,k,\ell-1}+T^{(2)}_{n-1,k,\ell},\label{A1_n,k,l}\\
T^{(2)}_{n,k,\ell}&=T^{(1)}_{n-1,k,\ell}+(n-2)T^{(2)}_{n-1,k,\ell}+T^{(2)}_{n-1,k-1,\ell}. \label{A2_n,k,l}
\end{align}
As shown on the left side of Fig.~\ref{bo-fig}, $x_iab$ is an occurrence of $q_1$, where the first element is a left-to-right minimum and the second and third elements are in the area 
$A_i's$, which has been observed in~\cite[Figure~1]{KLv}. 
Referring to the right side of Fig.~\ref{bo-fig}, any occurrence of $q_2$ consists of three consecutive left-to-right minima with $B_i=\emptyset$, as exemplified by $x_ix_{i+1}x_{i+2}$.

To generate a permutation $\pi=\pi_1\pi_2\cdots \pi_n \in S_n$ counted by the left-hand side of the two equations, we consider inserting the largest element $n$ into some $\pi'=\pi'_1\pi'_2\cdots \pi'_n \in S_{n-1}$. The items in~(\ref{A1_n,k,l}) are as follows:
\begin{itemize}
\item $\pi'_1>\pi'_2$. Then inserting $n$ in any slot except the second results in $\pi_1>\pi_2$. Namely,
in any of the last $(n-2)$ slots, the number of occurrences of both patterns would be preserved, which is counted by $(n-2)T^{(1)}_{n-1,k,\ell}$. However, inserting $n$ in the first slot establishes $n$ as a left-to-right minimum with $B_1=\emptyset$, thereby creating an extra occurrence of $q_2$ in the form of $n\pi_2\pi_3$, without affecting other occurrences. Thus, this case contributes to $T^{(1)}_{n-1,k,\ell-1}$.
\item $\pi'_1<\pi'_2$. Then inserting $n$ only in the first slot makes $\pi_1>\pi_2$. This insertion does not affect any occurrence of either pattern and gives $T^{(2)}_{n-1,k,\ell}$.
 \end{itemize}
 For~(\ref{A2_n,k,l}), we divide our discussion into the following cases.
 \begin{itemize}
\item  $\pi'_1>\pi'_2$. Then inserting $n$ only in the second slot makes $\pi_1<\pi_2$, and this insertion does not change the number of occurrences of either pattern. Therefore, this case is given by $T^{(1)}_{n-1,k,\ell}$.
\item $\pi'_1<\pi'_2$. Then inserting $n$ in any slot except the first ensures $\pi_1<\pi_2$. Namely, in any of the second or the last $(n-3)$ slots, the number of occurrences of both patterns remains unchanged, which contributes to $(n-2)T^{(2)}_{n-1,k,\ell}$.
 Inserting $n$ in the third slot leads to $\pi_2, n \in A_1$, thereby creating an occurrence of $q_1$ in the form of $\pi_1\pi_2n$, without affecting other occurrences of either pattern. Thus, this case contributes to $T^{(2)}_{n-1,k-1,\ell}$.
\end{itemize}  
The initial conditions are $T^{(1)}_{2,0,0}=T^{(2)}_{2,0,0}=1$,  $T^{(1)}_{3,0,1}=T^{(2)}_{3,1,0}=1$ and $T^{(1)}_{3,1,0}=T^{(2)}_{3,0,1}=0$.
 By directly using~(\ref{A1_n,k,l}) and~(\ref{A2_n,k,l}), and applying induction on $n$, $k$ and $\ell$, we obtain the results. Thus, we complete the proof. 
\end{proof}
 
\subsection{Wilf-equivalence of Nr.~S21 and S22}\label{sub-Wilf}

The computer evidence also suggests two additional pairs of patterns with symmetric shadings.
\begin{conj} \label{con-pairs-21-22}
 $\pattern{scale = 0.5}{3}{1/1,2/2,3/3}{0/0,0/1,0/2,1/0,1/1,1/2,2/0,2/1,2/2,3/3}\sim_{jd} \hspace{-1mm}\pattern{scale = 0.5}{3}{1/3,2/2,3/1}{0/0,0/1,0/2,1/0,1/1,1/2,2/0,2/1,2/2,3/3}$ and $\hspace{-1mm}\pattern{scale = 0.5}{3}{1/1,2/2,3/3}{0/0,1/1,1/2,1/3,2/1,2/2,2/3,3/1,3/2,3/3}\sim_{jd}\hspace{-1mm}\pattern{scale = 0.5}{3}{1/3,2/2,3/1}{0/0,1/1,1/2,1/3,2/1,2/2,2/3,3/1,3/2,3/3}$  (Nr.~S21 and S22) .
\end{conj}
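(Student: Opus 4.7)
The plan has two stages. First, I would observe that the two pairs S21 and S22 have identical joint distributions, reducing the conjecture to a single pair. Indeed, the composition $c\circ r$ of complement and reverse acts on box coordinates via $(x,y)\mapsto(3-x,3-y)$, and a direct check shows that this operation maps the shading of S21 (the bottom-left $3\times 3$ block of boxes together with the box $(3,3)$) exactly onto the shading of S22 (the box $(0,0)$ together with the top-right $3\times 3$ block), while fixing both underlying permutations $123$ and $321$. Hence $c\circ r$ induces a bijection on $S_n$ sending each occurrence of an S21 pattern to an occurrence of the corresponding S22 pattern, so that the joint distribution for S21 equals that for S22, and it suffices to prove the conjecture for S21 alone.

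Next, one analyzes the structure of occurrences for S21. Writing out the shading conditions, an occurrence of $q_1=123$ at positions $i_1<i_2<i_3$ forces $\pi_{i_3}$ to be a right-to-left maximum having \emph{exactly} two smaller elements $\pi_{i_1}<\pi_{i_2}$ before it in the prefix $\pi_1\cdots\pi_{i_3-1}$, and forces every entry after position $i_3$ to be smaller than $\pi_{i_3}$. Dually, an occurrence of $q_2=321$ at positions $i_1<i_2<i_3$ forces $\pi_{i_1}$ to be a left-to-right minimum such that $\pi_{i_2}$ and $\pi_{i_3}$ are the first two elements smaller than $\pi_{i_1}$ appearing after position $i_1$, with $\pi_{i_3}<\pi_{i_2}$ and every element after position $i_3$ also smaller than $\pi_{i_1}$. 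With these characterizations, the plan is to imitate the strategy in the proof of Theorem~\ref{thm-pairs-19-20}: decompose $T_{n,k,\ell}$ into refinements indexed by structural data (such as the position of the maximum entry $n$, or the comparison between $\pi_1$ and $\pi_n$) and derive a closed system of recurrences manifestly symmetric under the $(k,\ell)$-swap. A complementary approach would be to construct an explicit involution on $S_n$ swapping the $q_1$- and $q_2$-statistics, for instance by locally reorganizing the entries around each right-to-left maximum or left-to-right minimum that marks such an occurrence.

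The principal obstacle is that the shading here is globally constraining: the occurrence conditions involve every entry before the distinguished right-to-left maximum (for $q_1$) and every entry after the transition position (for $q_2$). Consequently, inserting $n$ into a permutation of $S_{n-1}$ can create, destroy, or alter many occurrences simultaneously, rather than a bounded number as in Theorem~\ref{thm-pairs-19-20}, so the clean ``one new slot creates at most one new occurrence'' flavour of that proof breaks down. The same non-locality frustrates a direct swap bijection, since any rearrangement that converts a $q_1$ occurrence into a $q_2$ occurrence tends to disturb the ``all smaller'' tail and head conditions elsewhere in the permutation. Overcoming this will presumably require identifying a finer joint refinement of $T_{n,k,\ell}$ — perhaps tracking both the right-to-left maxima indexed by their smaller-prefix counts and the left-to-right minima indexed by the configuration of smaller elements in their suffix — so that the recurrence closes and the desired symmetry becomes transparent; finding such a refinement is, in my view, the crux of the problem and presumably why the authors left joint equidistribution here as a conjecture.
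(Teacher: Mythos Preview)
Your first stage is correct and matches the paper exactly: the composition of complement and reverse sends the S21 shading to the S22 shading while fixing the underlying words $123$ and $321$, so it suffices to treat S21. The paper records precisely this reduction at the start of the proof of Theorem~\ref{thm-pairs-21-22}.

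However, you should note that the statement you are addressing is a \emph{conjecture}: the paper does not prove $q_1\sim_{jd}q_2$ for S21 at all, and there is therefore no ``paper's own proof'' to compare against. What the paper does prove is the strictly weaker Wilf-equivalence $|S_n(q_1)|=|S_n(q_2)|$ (Theorem~\ref{thm-pairs-21-22}), via an iterated swapping bijection: given $\pi\in S_n(q_1)$ containing $q_2$, one repeatedly locates the lexicographically first occurrence $abc$ of $q_2$ and swaps $a$ and $c$; one checks that no lexicographically earlier occurrence of $q_2$ is created, so the process terminates in $S_n(q_2)$, and the inverse procedure is analogous. This argument controls only avoidance, not the full joint distribution.

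Your second stage is not a proof but a diagnosis, and it is an accurate one. Your structural characterizations of $q_1$- and $q_2$-occurrences are correct, and your explanation of why the insertion-of-$n$ recursion from Theorem~\ref{thm-pairs-19-20} fails here --- the global ``all entries before $i_3$ that are below $\pi_{i_3}$'' constraint makes the effect of a single insertion unbounded --- is exactly the obstruction. Since you ultimately concede that the crux remains open, your proposal is consistent with the paper's status: Conjecture~\ref{con-pairs-21-22} is left unproved, and only its Wilf-equivalence shadow is established.
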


The patterns in pair Nr.~S22 can be obtained from those of Nr.~S21 by applying 
the reverse and complement operations. Therefore, it suffices to prove the 
joint equidistribution for Nr.~S21.
The Wilf-equivalence of the patterns in Conjecture \ref{con-pairs-21-22} holds. 
\begin{thm}\label{thm-pairs-21-22}
 The patterns $\pattern{scale = 0.5}{3}{1/1,2/2,3/3}{0/0,0/1,0/2,1/0,1/1,1/2,2/0,2/1,2/2,3/3}, \hspace{-1mm}\pattern{scale = 0.5}{3}{1/3,2/2,3/1}{0/0,0/1,0/2,1/0,1/1,1/2,2/0,2/1,2/2,3/3}, \hspace{-1mm}\pattern{scale = 0.5}{3}{1/1,2/2,3/3}{0/0,1/1,1/2,1/3,2/1,2/2,2/3,3/1,3/2,3/3}, \hspace{-1mm}\pattern{scale = 0.5}{3}{1/3,2/2,3/1}{0/0,1/1,1/2,1/3,2/1,2/2,2/3,3/1,3/2,3/3}$ are Wilf-equivalent.
\end{thm}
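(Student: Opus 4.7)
My plan is first to reduce to a single Wilf-equivalence via a symmetry, and then to construct a bijection.

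The shading of the patterns in pair S22 is the image of the shading of S21 under the box map $(i,j)\mapsto(3-i,3-j)$; that is, the two shadings are reverse-complements of each other. Since both underlying permutations $123$ and $321$ are fixed by the reverse-complement operation, each pattern of S22 is the reverse-complement of the corresponding pattern of S21. Because $\pi\mapsto\pi^{rc}$ preserves occurrence counts up to applying rc to the pattern, we already obtain the Wilf-equivalence of S21 with S22 pattern by pattern. The theorem therefore reduces to proving the Wilf-equivalence within pair S21, i.e., $|S_n(q_1)|=|S_n(q_2)|$ for the two patterns of S21 (which we again denote $q_1$ for $123$ and $q_2$ for $321$).

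To establish this, I would construct a bijection between $C_1(n):=S_n\setminus S_n(q_1)$ and $C_2(n):=S_n\setminus S_n(q_2)$. The key structural observation is that the two types of occurrences share exactly the same skeleton. Explicitly, an occurrence of either pattern at positions $i<j<k$ forces: (i) the three entries $\pi_i,\pi_j,\pi_k$ are the three smallest values of the prefix $\pi_1\cdots\pi_k$; (ii) the remaining $k-3$ prefix entries lie strictly above the triple; and (iii) every suffix entry $\pi_{k+1},\ldots,\pi_n$ lies strictly below the maximum of the triple, which forces $\max\{\pi_i,\pi_j,\pi_k\}=n-k+3$. The only difference is that the triple is arranged in increasing order at $(i,j,k)$ for $q_1$ and in decreasing order for $q_2$. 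This identical skeleton motivates a bijection that simply reverses the values in a canonically chosen triple, locally swapping $q_1$ with $q_2$ while leaving the rigid surrounding structure intact.

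The main obstacle is that naive local reversal is not automatically injective when a permutation admits several competing occurrences. Already in $S_4$, both $2341$ (with unique $q_1$-triple at positions $1,2,3$) and $4123$ (with unique $q_1$-triple at positions $2,3,4$) collapse to $4321$ under triple reversal. To handle this, I plan to stratify $C_1(n)$ and $C_2(n)$ according to the parameters of a canonically chosen occurrence --- the pivot $k$, the top triple value $m_3=n-k+3$, and the positions of the $k-3$ large prefix entries relative to the $n-k$ small suffix entries --- and construct the bijection on each stratum separately. The hardest step will be to pin down a canonical rule for selecting the pivotal occurrence (most likely a lexicographic rule on triples) so that the forward map and its inverse are mutually consistent on permutations with multiple occurrences. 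Once this is achieved, the stratumwise correspondence yields $|C_1(n)|=|C_2(n)|$, hence $|S_n(q_1)|=|S_n(q_2)|$, and the theorem follows.
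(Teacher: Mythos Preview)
Your reduction of S22 to S21 via reverse--complement is correct and matches the paper's use of $c$ followed by $r$. Your structural analysis of what an occurrence of $q_1$ or $q_2$ forces (the triple consists of the three smallest values of the prefix $\pi_1\cdots\pi_k$, the other $k-3$ prefix entries lie above, and $\max\{\pi_i,\pi_j,\pi_k\}=n-k+3$) is also correct and is a nice observation.

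However, the proposal has a genuine gap: you never construct the bijection. You correctly diagnose that a single triple-reversal on containers is not injective (your $2341,4123\mapsto 4321$ example), and you propose to repair this by stratifying according to a canonically chosen occurrence, ``most likely a lexicographic rule.'' But you do not specify the rule, and your own example already shows why this is delicate: in $4321$ the lex-first $q_2$-occurrence sits at positions $(1,2,3)$ with pivot $k=3$, so if you stratify $C_2(n)$ by the canonical $q_2$-pivot, the permutation $4321$ lands in the $k=3$ stratum, while its preimage $4123$ lives in the $k=4$ stratum of $C_1(n)$. Thus a lex-first rule does not make the single-swap map stratum-preserving, and no alternative rule is supplied. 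As written, the plan stops exactly at the point where the real work begins.

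The paper takes a different route that sidesteps this difficulty entirely. Rather than putting containers in bijection, it builds $f:S_n(q_1)\to S_n(q_2)$ on \emph{avoiders}: if $\pi\in S_n(q_1)$ contains $q_2$, repeatedly locate the lexicographically first $q_2$-occurrence and swap its first and third entries, iterating until no $q_2$ remains. The crucial lemma is that each swap creates no new $q_2$-occurrence lexicographically earlier than the one just destroyed; hence the position-triple of the lex-first $q_2$ strictly increases at every step, forcing termination. The inverse is obtained symmetrically by iteratively swapping lex-first $q_1$-occurrences in elements of $S_n(q_2)$. The iteration is what absorbs the ``multiple occurrences'' problem you flagged; a single canonical swap is not needed.
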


\begin{proof}
 Applying the complement and reverse operations, we have
\[
\pattern{scale = 0.5}{3}{1/1,2/2,3/3}{0/0,0/1,0/2,1/0,1/1,1/2,2/1,2/2,2/0,3/3}\xrightarrow{c}\pattern{scale = 0.5}{3}{1/3,2/2,3/1}{0/3,0/1,0/2,1/3,1/1,1/2,2/1,2/2,2/3,3/0}\xrightarrow{r}\pattern{scale = 0.5}{3}{1/1,2/2,3/3}{0/0,3/3,3/1,3
/2,1/3,1/1,1/2,2/1,2/2,2/3},\hspace{0.5cm} \pattern{scale = 0.5}{3}{1/3,2/2,3/1}{0/0,0/1,0/2,1/0,1/1,1/2,2/1,2/2,2/0,3/3}\xrightarrow{c}\pattern{scale = 0.5}{3}{1/1,2/2,3/3}{0/3,0/1,0/2,1/3,1/1,1/2,2/1,2/2,2/3,3/0}\xrightarrow{r}\pattern{scale = 0.5}{3}{1/3,2/2,3/1}{0/0,3/3,3/1,3
/2,1/3,1/1,1/2,2/1,2/2,2/3}.
\]
Thus, it suffices to prove the Wilf-equivalence of Nr.~S21. Here, we let
\[q_1=\pattern{scale = 0.5}{3}{1/1,2/2,3/3}{0/0,0/1,0/2,1/0,1/1,1/2,2/0,2/1,2/2,3/3},\hspace{0.6cm}q_2=\pattern{scale = 0.5}{3}{1/3,2/2,3/1}{0/0,0/1,0/2,1/0,1/1,1/2,2/0,2/1,2/2,3/3}.
\]
 We establish a  bijection $f$ that maps $S_n(q_1)$ to $S_n(q_2)$ as follows: If $\pi \in S_n(q_1) \cap S_n(q_2)$, then let $f(\pi)=\pi$. If $\pi \in S_n(q_1)$ and contains at least one occurrence of $q
_2$, then we apply the following swapping operation. 
\begin{itemize}
    \item Find the lexicographically first occurrence of $q_2$ and swap the first and third elements of it, as shown in Fig.~\ref{fig-pair21}. It is clear  the occurrence of $q_2$ turns to be an occurrence of $q_1$.
\end{itemize}
\noindent
Repeating the swapping operations until there are no more occurrences of $q_2$, we obtain a sequence of permutations $\pi= \pi^{(0)}, \pi^{(1)}, \dots, \pi^{(j)}$, we let $f(\pi)=\pi^{(j)}$. 
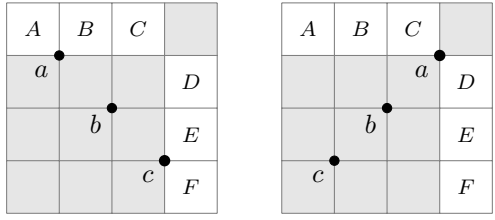
\begin{figure}[h]
\begin{center}  
\begin{tabular}{ccc}
\begin{tikzpicture}[scale=0.7]

\tikzset{      
   grid/.style={        
      draw,        
      step=1cm,        
      gray!100,    
      thin,        
    },   
    cell/.style={      
      draw,      
      anchor=center,    
      text centered,    
     },    
    graycell/.style={   
      fill=gray!20,
      draw=none,     
      minimum width=1cm,   
      minimum height=1cm,     
      anchor=south west,            
    }    
  } 
    \fill[graycell] (0,0) rectangle (3,3);
  \fill[graycell] (3,3) rectangle (4,4);
 
 \draw[grid] (0,0) grid (4,4);  
 
  \node at (0.5,3.5) {\scriptsize$A$};  
  \node at (1.5,3.5) {\scriptsize$B$};  
  \node at (2.5,3.5) {\scriptsize$C$};  
  \node at (3.5,2.5) {\scriptsize$D$};
  \node at (3.5,1.5) {\scriptsize$E$};
  \node at (3.5,0.5) {\scriptsize$F$};
  \node[anchor=east] at (3,0.7) {\footnotesize{$c$}}; 
  \node[anchor=east] at (2,1.7) {\footnotesize{$b$}}; 
  \node[anchor=east] at (1,2.7) {\footnotesize{$a$}}; 
 
  \filldraw[black] (1,3) circle (2.5pt);
   \filldraw[black] (2,2) circle (2.5pt);
   \filldraw[thick] (3,1) circle (2.5pt);

\end{tikzpicture} 
	& \ & 
\begin{tikzpicture}[scale=0.7]

\tikzset{      
   grid/.style={        
      draw,        
      step=1cm,        
      gray!100,    
      thin,        
    },   
    cell/.style={      
      draw,      
      anchor=center,    
      text centered,    
     },    
    graycell/.style={   
      fill=gray!20,
      draw=none,     
      minimum width=1cm,   
      minimum height=1cm,     
      anchor=south west,            
    }    
  } 
      
  \fill[graycell] (0,0) rectangle (3,3);
  \fill[graycell] (3,3) rectangle (4,4);
 
 \draw[grid] (0,0) grid (4,4);  

  \node at (0.5,3.5) {\scriptsize$A$};  
  \node at (1.5,3.5) {\scriptsize$B$};  
  \node at (2.5,3.5) {\scriptsize$C$};  
  \node at (3.5,2.5) {\scriptsize$D$};
  \node at (3.5,1.5) {\scriptsize$E$};
  \node at (3.5,0.5) {\scriptsize$F$};
  \node[anchor=east] at (1,0.7) {\footnotesize{$c$}}; 
  \node[anchor=east] at (2,1.7) {\footnotesize{$b$}}; 
  \node[anchor=east] at (3,2.7) {\footnotesize{$a
  $}}; 
 
  \filldraw[black] (1,1) circle (2.5pt);
   \filldraw[black] (2,2) circle (2.5pt);
   \filldraw[thick] (3,3) circle (2.5pt);
\end{tikzpicture} 
 \end{tabular}
\vspace{-0.5cm}

\end{center}
\caption{The swapping operation that maps $\pi^{(i)}$ with the lexicographically first occurrence of $q_2$  (on the left side) to $\pi^{(i+1)}$ with one occurrence of $q_1$ (on the right side) in Theorem \ref{thm-pairs-21-22}.}\label{fig-pair21}
\end{figure}

\textbf{$f$ is well-defined.} 
We shall prove the sequence described above terminates in finite steps.
Let $abc$ be the lexicographically first occurrence of $q_2$ in $\pi^{(i)}$ as shown on the left side of Fig.~\ref{fig-pair21}.
We claim that no new occurrence of $q_2$ is created in $\pi^{(i+1)}$ that is lexicographically smaller than the occurrence $abc$.
Suppose, contrary to our claim, that  $a'b'c'$ is such an occurrence of $q_2$ in $\pi^{(i+1)}$. Neither a nor b can appear in $a'b'c'$, since otherwise $a'b'c'$ would not be an occurrence of $q_2$ due to the existence of $c$. 
\begin{itemize}
    \item If $c=a'$, then $b'$ and $c'$  appear in the  area $F$, which would force $cb'c'$ to be the right of $abc$, contrary to the assumption that $cb'c'$ is lexicographically smaller than the occurrence $abc$.
    \item  If $c=b'$, then $a'$ must be in area $A$ and $c'$ must be in area $F$. Therefore, $a'cc'$ would not be an occurrence of $q_2$ since $a$ and $b$ are in shaded areas.
    \item  If $c = c'$, then the elements $a'$ and $b'$ are in the area~$A$.
In this case, $a'b'a$ would be an occurrence of $q_2$ to the left of $abc$ in $\pi^{(i)}$, which is a contradiction.
\end{itemize}
This proves our claim. Thus $f(\pi)=\pi^{(j)} \in S_n(q_2)$. 

\textbf{$f$ is reversible.} We consider the inverse mapping $f^{-1}$. For any $\sigma \in S_n(q_1) \cap S_n(q_2)$, we let $f^{-1}(\sigma)=\sigma$. For $\sigma\in S_n(q_2)$ that contains occurrences of $q_1$, find the lexicographically first $q_1$ and swap the first and third elements of it. Along similar lines, we can see that the procedure never creates any new occurrences of $q_1$ that are lexicographically smaller than the lexicographically first one. 
\end{proof}

\section{Minus-antipodal shadings}\label{sec-an-sy}

In this section, we consider 36 joint equidistributions of patterns with minus-antipodal shadings, as listed in Table~\ref{tab-anti-shadings}. In Sect.~\ref{subsec-an-cr}, we obtain the results by complement, reverse, and inverse operations. In Sect.~\ref{sub-an-rl}, we prove the results via recurrence relations. 

\subsection{Results obtained by complement and reverse}\label{subsec-an-cr}
We consider the following 16 pairs in this subsection.
\vspace{0.1cm}
\begin{center}
\begin{tabular}{rrrrr}
A1 = $\pattern{scale = 0.4}{3}{1/1,2/2,3/3}{0/0,0/1,0/2,0/3,1/1,1/2,3/1,3/2}\pattern{scale = 0.4}{3}{1/3,2/2,3/1}{0/0,0/1,0/2,0/3,1/1,1/2,3/1,3/2}$;&\hspace{-0.3cm}
A2 = $\pattern{scale = 0.4}{3}{1/1,2/2,3/3}{0/1,0/2,2/1,2/2,3/0,3/1,3/2,3/3}\pattern{scale = 0.4}{3}{1/3,2/2,3/1}{0/1,0/2,2/1,2/2,3/0,3/1,3/2,3/3}$;  &\hspace{-0.3cm}
A3 = $\pattern{scale = 0.4}{3}{1/1,2/2,3/3}{0/0,1/0,2/0,3/0,1/1,2/1,1/3,2/3}\pattern{scale = 0.4}{3}{1/3,2/2,3/1}{0/0,1/0,2/0,3/0,1/1,2/1,1/3,2/3}$; & \hspace{-0.3cm}
A4 = $\pattern{scale = 0.4}{3}{1/1,2/2,3/3}{1/0,1/2,2/0,2/2,0/3,1/3,2/3,3/3}\pattern{scale = 0.4}{3}{1/3
,2/2,3/1}{1/0,1/2,2/0,2/2,0/3,1/3,2/3,3/3}$;\\[5pt]
A5 = $\pattern{scale = 0.4}{3}{1/1,2/2,3/3}{0/0,0/1,0/2,0/3,2/1,2/2,3/1,3/2}\pattern{scale = 0.4}{3}{1/3,2/2,3/1}{0/0,0/1,0/2,0/3,2/1,2/2,3/1,3/2}$;   &\hspace{-0.3cm}
A6 = $\pattern{scale = 0.4}{3}{1/1,2/2,3/3}{0/1,0/2,1/1,1/2,3/0,3/1,3/2,3/3}\pattern{scale = 0.4}{3}{1/3,2/2,3/1}{0/1,0/2,1/1,1/2,3/0,3/1,3/2,3/3}$;  &\hspace{-0.3cm}
A7 = $\pattern{scale = 0.4}{3}{1/1,2/2,3/3}{0/0,1/0,2/0,3/0,1/2,2/2,1/3,2/3}\pattern{scale = 0.4}{3}{1/3,2/2,3/1}{0/0,1/0,2/0,3/0,1/2,2/2,1/3,2/3}$; & \hspace{-0.3cm}
A8 = $\pattern{scale = 0.4}{3}{1/1,2/2,3/3}{1/0,1/1,2/0,2/1,0/3,1/3,2/3,3/3}\pattern{scale = 0.4}{3}{1/3,2/2,3/1}{1/0,1/1,2/0,2/1,0/3,1/3,2/3,3/3}$;\\[5pt]
A9 = $\pattern{scale = 0.4}{3}{1/1,2/2,3/3}{1/0,1/1,1/2,1/3,2/0,2/3,3/0,3/3}\pattern{scale = 0.4}{3}{1/3,2/2,3/1}{1/0,1/1,1/2,1/3,2/0,2/3,3/0,3/3}$;  & \hspace{-0.3cm}
A10 = $\pattern{scale = 0.4}{3}{1/1,2/2,3/3}{0/0,0/3,1/0,1/3,2/0,2/1,2/2,2/3}\pattern{scale = 0.4}{3}{1/3
,2/2,3/1}{0/0,0/3,1/0,1/3,2/0,2/1,2/2,2/3}$; &\hspace{-0.3cm}
A11 = $\pattern{scale = 0.4}{3}{1/1,2/2,3/3}{0/1,0/2,0/3,1/1,2/1,3/1,3/2,3/3}\pattern{scale = 0.4}{3}{1/3,2/2,3/1}{0/1,0/2,0/3,1/1,2/1,3/1,3/2,3/3}$; &\hspace{-0.3cm}
A12 = $\pattern{scale = 0.4}{3}{1/1,2/2,3/3}{0/0,0/1,0/2,1/2,2/2,3/0,3/1,3/2}\pattern{scale = 0.4}{3}{1/3,2/2,3/1}{0/0,0/1,0/2,1/2,2/2,3/0,3/1,3/2}$; 
\\[5pt]
A13 = $\pattern{scale = 0.4}{3}{1/1,2/2,3/3}{0/0,0/3,1/0,1/1,1/2,1/3,2/0,2/3}\pattern{scale = 0.4}{3}{1/3,2/2,3/1}{0/0,0/3,1/0,1/1,1/2,1/3,2/0,2/3}$; & \hspace{-0.3cm}
A14 = $\pattern{scale = 0.4}{3}{1/1,2/2,3/3}{1/0,2/0,3/0,2/1,2/2,1/3,2/3,3/3}\pattern{scale = 0.4}{3}{1/3,2/2,3/1}{1/0,2/0,3/0,2/1,2/2,1/3,2/3,3/3}$;&\hspace{-0.3cm}
A15 = $\pattern{scale = 0.4}{3}{1/1,2/2,3/3}{0/0,0/1,0/2,1/1,2/1,3/0,3/1,3/2}\pattern{scale = 0.4}{3}{1/3,2/2,3/1}{0/0,0/1,0/2,1/1,2/1,3/0,3/1,3/2}$; & \hspace{-0.3cm}
A16 = $\pattern{scale = 0.4}{3}{1/1,2/2,3/3}{0/1,0/2,0/3,1/2,2/2,3/1,3/2,3/3}\pattern{scale = 0.4}{3}{1/3
,2/2,3/1}{0/1,0/2,0/3,1/2,2/2,3/1,3/2,3/3}$.
\end{tabular}
\end{center}  
Observe that the two patterns in each pair in the first two columns can be obtained from each other via the complement operation, whereas those in the last two columns can be obtained via the reverse operation. This proves the joint equidistributions of each pair. 
Moreover, pairs in the same row are also given the same joint distributions. Here, we demonstrate the operation for the first four pairs (A1$\to$A2$\to$A4$\to$A3) as an example, and the remaining  pairs are obtained in the same manner,
 \[
 \begin{tabular}{c}
 \pattern{scale = 0.4}{3}{1/1,2/2,3/3}{0/0,0/1,0/2,0/3,1/1,1/2,3/1,3/2}
 \\
 \vspace{4pt}
 \footnotesize $A1(p_1)$
 \end{tabular}
 \raisebox{8pt}{$\xrightarrow{r}$}  
 \begin{tabular}{c}
 \pattern{scale = 0.4}{3}{1/3,2/2,3/1}{0/1,0/2,2/1,2/2,3/0,3/1,3/2,3/3} \\
 \vspace{4pt}
 \footnotesize $A2 (p_2)$
 \end{tabular}
 \raisebox{8pt}{$\xrightarrow{i}$}  
 \begin{tabular}{c}
 \pattern{scale = 0.4}{3}{1/3,2/2,3/1}{1/0,1/2,2/0,2/2,0/3,1/3,2/3,3/3} \\
 \vspace{4pt}
 \footnotesize $A4 (p_2)$
 \end{tabular}
 \raisebox{8pt}{$\xrightarrow{c}$}  
 \begin{tabular}{c}
 \pattern{scale = 0.4}{3}{1/1,2/2,3/3}{0/0,1/0,2/0,3/0,1/1,2/1,1/3,2/3} \\
 \vspace{4pt}
 \footnotesize $A3 (p_1)$
 \end{tabular}
 \]
 \[
 \begin{tabular}{c}
 \pattern{scale = 0.4}{3}{1/3,2/2,3/1}{0/0,0/1,0/2,0/3,1/1,1/2,3/1,3/2} \\
 \vspace{4pt}
 \footnotesize $A1(p_2)$
 \end{tabular}
 \raisebox{8pt}{$\xrightarrow{r}$}  
 \begin{tabular}{c}
 \pattern{scale = 0.4}{3}{1/1,2/2,3/3}{0/1,0/2,2/1,2/2,3/0,3/1,3/2,3/3} \\
 \vspace{4pt}
 \footnotesize $A2 (p_1)$
 \end{tabular}
 \raisebox{8pt}{$\xrightarrow{i}$}  
 \begin{tabular}{c}
 \pattern{scale = 0.4}{3}{1/1,2/2,3/3}{1/0,1/2,2/0,2/2,0/3,1/3,2/3,3/3} \\
 \vspace{4pt}
 \footnotesize $A4 (p_1)$
 \end{tabular}
 \raisebox{8pt}{$\xrightarrow{c}$}  
 \begin{tabular}{c}
 \pattern{scale = 0.4}{3}{1/3,2/2,3/1}{0/0,1/0,2/0,3/0,1/1,2/1,1/3,2/3} \\
 \vspace{4pt}
 \footnotesize $A3 (p_2)$
 \end{tabular}
 \]

\subsection{Results proved via recurrence relations}\label{sub-an-rl}
 We consider the following 20 pairs:\vspace{0.1cm}
\begin{center}
\begin{tabular}{rrrr}
A17 = $\pattern{scale = 0.4}{3}{1/1,2/2,3/3}{0/0,0/1,0/2,0/3,1/1,2/1,3/1,3/2}\pattern{scale = 0.4}{3}{1/3,2/2,3/1}{0/0,0/1,0/2,0/3,1/1,2/1,3/1,3/2}$;&\hspace{-0.3cm}
A18 = $\pattern{scale = 0.4}{3}{1/1,2/2,3/3}{0/0,0/1,0/2,0/3,1/2,2/2,3/2,3/1}\pattern{scale = 0.4}{3}{1/3,2/2,3/1}{0/0,0/1,0/2,0/3,1/2,2/2,3/2,3/1}$; &\hspace{-0.3cm}
A19 = $\pattern{scale = 0.4}{3}{1/1,2/2,3/3}{0/0,1/0,2/0,3/0,1/1,1/2,1/3,2/3}\pattern{scale = 0.4}{3}{1/3,2/2,3/1}{0/0,1/0,2/0,3/0,1/1,1/2,1/3,2/3}$; & \hspace{-0.3cm}
A20 = $\pattern{scale = 0.4}{3}{1/1,2/2,3/3}{0/0,1/0,2/0,3/0,2/1,2/2,2/3,1/3}\pattern{scale = 0.4}{3}{1/3,2/2,3/1}{0/0,1/0,2/0,3/0,2/1,2/2,2/3,1/3}$; \\[5pt]
A21 = $\pattern{scale = 0.4}{3}{1/1,2/2,3/3}{0/1,0/2,1/1,2/1,3/1,3/0,3/2,3/3}\pattern{scale = 0.4}{3}{1/3,2/2,3/1}{0/1,0/2,1/1,2/1,3/1,3/0,3/2,3/3}$; &\hspace{-0.3cm}
A22 = $\pattern{scale = 0.4}{3}{1/1,2/2,3/3}{0/1,0/2,1/2,2/2,3/2,3/1,3/0,3/2,3/3}\pattern{scale = 0.4}{3}{1/3
,2/2,3/1}{0/1,0/2,1/2,2/2,3/2,3/0,3/1,3/2,3/3}$; &\hspace{-0.3cm}
A23 = $\pattern{scale = 0.4}{3}{1/1,2/2,3/3}{0/3,1/3,2/3,3/3,1/0,2/0,2/1,2/2}\pattern{scale = 0.4}{3}{1/3,2/2,3/1}{0/3,1/3,2/3,3/3,1/0,2/0,2/1,2/2}$; &\hspace{-0.3cm}
A24 = $\pattern{scale = 0.4}{3}{1/1,2/2,3/3}{0/3,1/3,2/3,3/3,1/0,2/0,1/1,1/2}\pattern{scale = 0.4}{3}{1/3
,2/2,3/1}{0/3,1/3,2/3,3/3,1/0,2/0,1/1,1/2}$; \\[5pt]
A25 =  $\pattern{scale = 0.4}{3}{1/1,2/2,3/3}{0/0,0/1,0/2,1/1,1/2,3/0,3/1,3/2}\pattern{scale = 0.4}{3}{1/3,2/2,3/1}{0/0,0/1,0/2,1/1,1/2,3/0,3/1,3/2}$; &\hspace{-0.3cm}
A26 = $\pattern{scale = 0.4}{3}{1/1,2/2,3/3}{1/0,2/0,3/0,1/1,2/1,1/3,2/3,3/3}\pattern{scale = 0.4}{3}{1/3
,2/2,3/1}{1/0,2/0,3/0,1/1,2/1,1/3,2/3,3/3}$;  &\hspace{-0.3cm}
A27 =  $\pattern{scale = 0.4}{3}{1/1,2/2,3/3}{0/1,0/2,0/3,2/1,2/2,3/1,3/2,3/3}\pattern{scale = 0.4}{3}{1/3,2/2,3/1}{0/1,0/2,0/3,2/1,2/2,3/1,3/2,3/3}$; & \hspace{-0.3cm}
A28 = $\pattern{scale =
0.4}{3}{1/1,2/2,3/3}{0/1,0/2,0/3,1/1,1/2,3/1,3/2,3/3}\pattern{scale = 0.4}{3}{1/3,2/2,3/1}{0/1,0/2,0/3,1/1,1/2,3/1,3/2,3/3}$; \\[5pt]
A29 = $\pattern{scale = 0.4}{3}{1/1,2/2,3/3}{0/0,1/0,2/0,1/2,2/2,0/3,1/3,2/3}\pattern{scale = 0.4}{3}{1/3,2/2,3/1}{0/0,1/0,2/0,1/2,2/2,0/3,1/3,2/3}$; &\hspace{-0.3cm}
A30 = $\pattern{scale = 0.4}{3}{1/1,2/2,3/3}{0/0,1/0,2/0,1/1,2/1,0/3,1/3,2/3}\pattern{scale = 0.4}{3}{1/3,2/2,3/1}{0/0,1/0,2/0,1/1,2/1,0/3,1/3,2/3}$; &\hspace{-0.3cm}
A31 = $\pattern{scale = 0.4}{3}{1/1,2/2,3/3}{1/0,2/0,3/0,1/2,2/2,1/3,2/3,3/3}\pattern{scale = 0.4}{3}{1/3,2/2,3/1}{1/0,2/0,3/0,1/2,2/2,1/3,2/3,3/3}$; & \hspace{-0.3cm}
A32 = $\pattern{scale = 0.4}{3}{1/1,2/2,3/3}{0/0,0/1,0/2,2/1,2/2,3/0,3/1,3/2}\pattern{scale = 0.4}{3}{1/3,2/2,3/1}{0/0,0/1,0/2,2/1,2/2,3/0,3/1,3/2}$; \\[5pt]
A33 = $\pattern{scale = 0.4}{3}{1/1,2/2,3/3}{0/2,1/0,1/1,1/2,2/2,3/0,3/1,3/2}\pattern{scale = 0.4}{3}{1/3,2/2,3/1}{0/2,1/0,1/1,1/2,2/2,3/0,3/1,3/2}$; &\hspace{-0.3cm}
A34 = $\pattern{scale = 0.4}{3}{1/1,2/2,3/3}{0/1,0/2,0/3,1/1,2/1,2/2,2/3,3/1}\pattern{scale = 0.4}{3}{1/3,2/2,3/1}{0/1,0/2,0/3,1/1,2/1,2/2,2/3,3/1}$;  &\hspace{-0.3cm}
A35 = $\pattern{scale = 0.4}{3}{1/1,2/2,3/3}{0/1,0/3,1/1,1/3,2/0,2/1,2/1,2/2,2/3}\pattern{scale = 0.4}{3}{1/3,2/2,3/1}{0/1,0/3,1/1,1/3,2/0,2/1,2/1,2/2,2/3}$; & \hspace{-0.3cm}
A36 = $\pattern{scale = 0.4}{3}{1/1,2/2,3/3}{1/0,1/1,1/2,1/3,2/0,2/2,3/0,3/2}\pattern{scale = 0.4}{3}{1/3,2/2,3/1}{1/0,1/1,1/2,1/3,2/0,2/2,3/0,3/2}$. 
\end{tabular}
\end{center}

Recall that  the {\em unsigned Stirling number of the first kind} (\cite[A132393]{OEIS}) is defined by $c(n,k)=0$ if $n<k$ or $k=0$, except $c(0,0)=1$, and 
\[
c(n,k)=(n-1)c(n-1,k)+c(n-1,k-1).
\]
\begin{thm}\label{thm-pairs-39-47}
   We have $\hspace{-1mm}\pattern{scale = 0.5}{3}{1/1,2/2,3/3}{0/0,0/1,0/2,0/3,1/1,2/1,3/1,3/2}\sim_{jd}\hspace{-0.1cm}\pattern{scale =0.5}{3}{1/3,2/2,3/1}{0/0,0/1,0/2,0/3,1/1,2/1,3/1,3/2}$, $\hspace{-0.1cm}\pattern{scale = 0.5}{3}{1/1,2/2,3/3}{0/0,0/1,0/2,0/3,1/2,2/2,3/2,3/1}\sim_{jd}\hspace{-0.1cm}\pattern{scale = 0.5}{3}{1/3,2/2,3/1}{0/0,0/1,0/2,0/3,1/2,2/2,3/2,3/1} $,  $\pattern{scale = 0.5}{3}{1/1,2/2,3/3}{0/0,1/0,2/0,3/0,1/1,1/2,1/3,2/3}\sim_{jd}
   \hspace{-0.1cm}\pattern{scale = 0.5}{3}{1/3,2/2,3/1}{0/0,1/0,2/0,3/0,1/1,1/2,1/3,2/3}$, $\pattern{scale = 0.5}{3}{1/1,2/2,3/3}{0/0,1/0,2/0,3/0,2/1,2/2,2/3,1/3}\sim_{jd}
   \hspace{-0.1cm}\pattern{scale = 0.5}{3}{1/3,2/2,3/1}{0/0,1/0,2/0,3/0,2/1,2/2,2/3,1/3}$, $\pattern{scale = 0.5}{3}{1/1,2/2,3/3}{0/1,0/2,1/1,2/1,3/1,3/0,3/2,3/3}\sim_{jd}
   \hspace{-0.1cm}\pattern{scale = 0.5}{3}{1/3,2/2,3/1}{0/1,0/2,1/1,2/1,3/1,3/0,3/2,3/3}$, $\pattern{scale = 0.5}{3}{1/1,2/2,3/3}{0/1,0/2,1/2,2/2,3/2,3/1,3/0,3/2,3/3}\sim_{jd}\hspace{-0.1cm}\pattern{scale = 0.5}{3}{1/3
,2/2,3/1}{0/1,0/2,1/2,2/2,3/2,3/0,3/1,3/2,3/3} $, $\pattern{scale = 0.5}{3}{1/1,2/2,3/3}{0/3,1/3,2/3,3/3,1/0,2/0,2/1,2/2}\sim_{jd}\hspace{-0.1cm}\pattern{scale = 0.5}{3}{1/3,2/2,3/1}{0/3,1/3,2/3,3/3,1/0,2/0,2/1,2/2}$ and $\pattern{scale = 0.5}{3}{1/1,2/2,3/3}{0/3,1/3,2/3,3/3,1/0,2/0,1/1,1/2}\sim_{jd}\hspace{-0.1cm}\pattern{scale = 0.5}{3}{1/3
,2/2,3/1}{0/3,1/3,2/3,3/3,1/0,2/0,1/1,1/2}$ (Nr.~A17 to A24). Moreover, for $n\geq 2$, all the pairs satisfy 
\begin{align} \label{TK-pair-39}
T_{n,k,\ell}=\left\{  
\begin{array}{llll}  
\binom{k+\ell+2}{k+1} c(n-1,k+\ell+2), & \text{} k \geq 1, \ell \geq 1 \\[6pt]
(k+2) c(n-1,k+2) + c(n-1,k+1), & \text{} k\geq 1, \ell=0 , \\[6pt]
(\ell+2) c(n-1,\ell+2) + c(n-1,\ell+1), & \text{} \ell\geq 1, k=0 , \\[6pt]
2(n-2)!\,(1+\sum_{i=1}^{n-2}\frac{1}{i}), & \text{} k=\ell=0.
\end{array}  
\right.
\end{align}
Equivalently,
\begin{align}\label{T-pair-39-generating}
  T_n(x,y)=&\sum_{i=0}^{n-2} c(n-1,i+1)(x^i+y^i)  \notag \\
& \   + \sum_{k=0}^{n-2} \sum_{\ell=0}^{n-2}\binom{k+\ell+2}{k+1} c(n-1,k+\ell+2) \, x^k y^\ell.  
\end{align}
\end{thm}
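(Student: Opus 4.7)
The plan is to proceed in three stages. First, I would reduce the eight pairs A17--A24 to a single representative pair by exploiting the complement, reverse, and inverse symmetries of the minus-antipodal shadings, in the same spirit as Subsection~\ref{subsec-an-cr}. A careful check of the shading diagrams should show that A18, A19, A20 are obtainable from A17 by applying $c$, $r$, and $r\circ c \circ i$ respectively (and likewise A22, A23, A24 relate to A21), and that A21 and A17 are themselves related by a composite operation. These symmetries simultaneously swap $q_1$ with $q_2$ or leave them fixed while preserving $T_{n,k,\ell}$, so it suffices to prove the formula~\eqref{TK-pair-39} for one canonical pair, say A17.

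Second, for that representative I would establish a structural decomposition of $\mathcal{T}(n,k,\ell)$ by analyzing which permutations contribute to each occurrence of $q_1$ and $q_2$. The left column shading in A17 forces the first element of any occurrence of $q_1$ to be $\pi_1$, and the shaded boxes in the bottom-right corner forbid certain elements after the third dot, so an occurrence of $q_1$ has the shape $\pi_1 \pi_j \pi_k$ with very restricted elements between and after. A dual analysis for $q_2$ (a 321-occurrence starting at $\pi_1$) leads to an analogous picture. From this one can already see a bijective explanation of the symmetry $T_{n,k,\ell}=T_{n,\ell,k}$ by swapping the roles of ``ascents'' and ``descents'' among left-to-right extrema, mirroring Theorem~\ref{thm-pairs-19-20}.

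Third, I would prove the closed form~\eqref{TK-pair-39} by induction on $n$, using a recurrence obtained by tracking where the smallest entry $1$ (equivalently the largest entry $n$) sits in a permutation of $\mathcal{T}(n,k,\ell)$. The expected recurrence should match the shape of the Stirling recursion $c(n,k)=(n-1)c(n-1,k)+c(n-1,k-1)$: the $(n-1)$-factor corresponds to generic insertions that preserve the occurrence counts of both $q_1$ and $q_2$, while the extra term corresponds to the single insertion position that creates one new occurrence (of $q_1$ or $q_2$ depending on whether $n$ or $1$ is being placed). Matching this against the generic formula $\binom{k+\ell+2}{k+1}c(n-1,k+\ell+2)$ is a short calculation via Pascal's rule. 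The boundary cases $k=0$ or $\ell=0$ pick up the extra $c(n-1,k+1)$ term because one of the two ``creation'' branches collapses into the generic branch, and the fully degenerate case $k=\ell=0$ yields $2c(n-1,1)+2c(n-1,2)=2(n-2)!\bigl(1+H_{n-2}\bigr)$ via the classical identity $c(n-1,2)=(n-2)!\,H_{n-2}$.

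The main obstacle is the boundary bookkeeping: the closed form genuinely changes shape at $k=0$ and at $\ell=0$, and a naive recurrence would only give the generic formula. One must verify, case by case, that when the inserted element creates exactly one occurrence on a permutation that previously had none in one of the patterns, the contribution merges correctly with the pre-existing count, producing the extra $c(n-1,k+1)$ (resp.\ $c(n-1,\ell+1)$) summand. The harmonic contribution at $k=\ell=0$ then falls out of the identity $c(m,2)=(m-1)!\,H_{m-1}$. A secondary obstacle is simply verifying the eight symmetry reductions without mis-shading any of the boxes; I would double-check these on small cases by direct enumeration before presenting the induction.
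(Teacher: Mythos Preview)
Your reduction in Stage~1 is the same idea as the paper's, though the precise chain of operations differs slightly (the paper runs $q_1\text{(A17)}\xrightarrow{c}q_2\text{(A18)}\xrightarrow{r}q_1\text{(A22)}\xrightarrow{c}q_2\text{(A21)}$ and $q_1\text{(A17)}\xrightarrow{i}q_1\text{(A19)}\xrightarrow{r}q_2\text{(A20)}\xrightarrow{c}q_1\text{(A23)}\xrightarrow{r}q_2\text{(A24)}$); this is immaterial.

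Where your plan diverges is Stages~2--3. The paper does \emph{not} set up a Stirling-type insertion recurrence for $T_{n,k,\ell}$ directly. Instead it observes that for A17 every occurrence of either pattern must start at $\pi_1$, and then splits the remaining letters into $A=\{\pi_j:\pi_j>\pi_1\}$ and $B=\{\pi_j:\pi_j<\pi_1\}$. The crucial structural fact you do not isolate is that the number of $q_1$-occurrences depends \emph{only} on the relative order of $A$, and the number of $q_2$-occurrences depends \emph{only} on the relative order of $B$; in each case the count is governed by a length-$2$ mesh pattern obtained by deleting the first row/column. This yields the product formula
\[
T_{n,k,\ell}=\sum_{i=0}^{n-1}\binom{n-1}{i}\,\widetilde T_{i,k}\,\widetilde T_{n-1-i,\ell},
\]
and a short lemma (Lemma~\ref{JL2}) shows $\widetilde T_{m,k}=c(m,k+1)$ via the Stirling recursion applied to the \emph{length-$2$} pattern. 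The joint equidistribution $T_{n,k,\ell}=T_{n,\ell,k}$ is then immediate from the symmetry of the product, and~\eqref{T-pair-39-generating} follows by the Chu--Vandermonde-type identity $\sum_i\binom{n-1}{i}c(i,a)c(n-1-i,b)=\binom{a+b}{a}c(n-1,a+b)$.

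Your direct-induction route is not wrong in principle, but it is substantially heavier: inserting $n$ into position~$1$ wipes out all $q_1$-occurrences and resets the $q_2$-count, so the recurrence does not have the clean ``$(n-1)$ generic insertions plus one creating insertion'' shape you anticipate; you would need to condition further on $\pi_1$ anyway. The paper's decomposition sidesteps all of your boundary bookkeeping, because the $k=0$ and $\ell=0$ corrections in~\eqref{TK-pair-39} arise transparently from the $i=0$ and $i=n-1$ terms of the product (where $\widetilde T_{0,0}=1$ contributes the extra $c(n-1,k+1)$), and the harmonic case drops out of $2c(n-1,2)+2c(n-1,1)$ exactly as you say. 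In short: your Stage~2 gesture toward ``left-to-right extrema'' is pointing in the right direction, but the clean statement is the above/below-$\pi_1$ independence, and once you have that, Stage~3's induction becomes unnecessary.
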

Instead of directly finding the recurrence relations of any pair of patterns, we first consider two equidistributed patterns of length two by removing one row and one column that contains the first element of Nr.~A17. 
Let $\widetilde{T}_{n,k}$ be the number of $n$-permutations with $k$ occurrences of $\pattern{scale = 0.5}{2}{1/1,2/2}{0/0,1/0,2/0,2/1}$.

\begin{lem}\label{JL2} 
We have $\pattern{scale = 0.5}{2}{1/1,2/2}{0/0,1/0,2/0,2/1}\sim_d\hspace{-0.1cm}\pattern{scale =0.5}{2}{1/2,2/1}{0/1,1/1,2/1,2/2}$. Moreover, the number $\widetilde{T}_{n,k}$ coincides with the unsigned Stirling number of the first kind $c(n,k+1)$. 
\end{lem}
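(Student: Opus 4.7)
The plan is to reinterpret each of the two length-$2$ mesh patterns as a classical permutation statistic and then invoke a standard identity for the unsigned Stirling numbers of the first kind.

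For the pattern $p_1 = \pattern{scale = 0.5}{2}{1/1,2/2}{0/0,1/0,2/0,2/1}$, the entire bottom row being shaded forces the smaller entry $\pi_i$ of an occurrence to be the global minimum, so $\pi_i = 1$. The additional shaded box $(2,1)$ then precludes any value strictly between $1$ and $\pi_j$ from appearing at a position to the right of $j$, which is equivalent to saying that $\pi_j$ is a right-to-left minimum. Since $1$ is always a right-to-left minimum and is the leftmost one, the number of occurrences of $p_1$ in $\pi$ equals the number of right-to-left minima of $\pi$ lying strictly to the right of the position of $1$, namely $\mathrm{rlmin}(\pi)-1$.

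For $p_2 = \pattern{scale =0.5}{2}{1/2,2/1}{0/1,1/1,2/1,2/2}$, the shading of the entire middle row forces $\pi_i = \pi_j + 1$, while the additional box $(2,2)$ forbids any value larger than $\pi_i$ at positions to the right of $j$; together these impose that $\pi_j$ is a right-to-left maximum. Conversely, whenever $\pi_j$ is a right-to-left maximum with $\pi_j < n$, the entry $\pi_j + 1$ automatically sits at some position $i < j$ (since every value greater than $\pi_j$ must lie among positions $\le j$), giving one occurrence of $p_2$. As $n$ is always a right-to-left maximum, the number of occurrences of $p_2$ equals $\mathrm{rlmax}(\pi)-1$.

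With these two identifications in hand, the equidistribution $p_1 \sim_d p_2$ follows immediately from the complement bijection $\pi \mapsto \pi^c$ on $S_n$, which sends right-to-left minima of $\pi$ to right-to-left maxima of $\pi^c$ and hence satisfies $\mathrm{rlmin}(\pi) = \mathrm{rlmax}(\pi^c)$. To establish $\widetilde{T}_{n,k} = c(n,k+1)$, I would then invoke the classical fact that the number of permutations in $S_n$ with exactly $k$ right-to-left minima equals $c(n,k)$; this can be checked directly by induction, using that inserting $n$ at the end of a permutation in $S_{n-1}$ creates one new right-to-left minimum while any other insertion preserves the count, which reproduces the Stirling recurrence $c(n,k) = (n-1)c(n-1,k) + c(n-1,k-1)$.

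The main obstacle I anticipate is the careful translation of the shaded regions into the right-to-left minimum/maximum conditions, particularly the verification that for $p_2$ the constraint "$\pi_i = \pi_j + 1$ with $i < j$" is automatic once $\pi_j$ is an RLmax with value less than $n$, so that the count is cleanly $\mathrm{rlmax}(\pi) - 1$ rather than something more restrictive.
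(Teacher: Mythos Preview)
Your proof is correct. Both your approach and the paper's ultimately rest on the complement operation together with an insertion-of-$n$ argument, but you route the argument through classical permutation statistics whereas the paper does not. You first identify the occurrence counts of the two mesh patterns with $\mathrm{rlmin}(\pi)-1$ and $\mathrm{rlmax}(\pi)-1$, then invoke the standard facts that complement swaps these two statistics and that right-to-left minima are distributed according to $c(n,\cdot)$. The paper instead applies complement at the \emph{pattern} level, transforming $p_2$ into the ascent pattern $\pattern{scale =0.5}{2}{1/1,2/2}{0/1,1/1,2/1,2/0}$, and then verifies the Stirling recurrence $\widetilde{T}_{n,k}=(n-1)\widetilde{T}_{n-1,k}+\widetilde{T}_{n-1,k-1}$ directly for each of the two mesh patterns by observing which single new occurrence (namely $1n$, respectively $(n-1)n$) is created when $n$ is inserted in the last slot. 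Your version is more conceptual and makes transparent \emph{why} the Stirling numbers appear; the paper's version stays entirely inside the mesh-pattern language and avoids the separate (easy) check that every right-to-left extremum really produces an occurrence of the corresponding mesh pattern.
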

\begin{proof}
By the complement operation, we have 
$
\pattern{scale =0.5}{2}{1/2,2/1}{0/1,1/1,2/1,2/2}\xrightarrow{c}\pattern{scale =0.5}{2}{1/1,2/2}{0/1,1/1,2/1,2/0}.
$
Thus, it is equivalent to prove $\pattern{scale = 0.5}{2}{1/1,2/2}{0/0,1/0,2/0,2/1}\sim_d\hspace{-0.1cm}\pattern{scale =0.5}{2}{1/1,2/2}{0/1,1/1,2/1,2/0}$. First,
to prove
\begin{align}\label{T=stirling}
\widetilde{T}_{n,k}=c(n,k+1),
\end{align}
we shall show
\begin{align}\label{TJ}
   \widetilde{T}_{n,k}=(n-1)\widetilde{T}_{n-1,k}+\widetilde{T}_{n-1,k-1},
\end{align}
together with  $\widetilde{T}_{1,0}=c(1,1)=1$
holds for both patterns. We obtain each permutation counted by the left side of~(\ref{TJ}) by inserting the largest element $n$ into an $(n-1)$-permutation.

\begin{itemize}
\item
For the pattern $\pattern{scale = 0.5}{2}{1/1,2/2}{0/0,1/0,2/0,2/1}$, the insertion of $n$ in the last slot creates an additional occurrence of the pattern with $1n$, which contributes to
$\widetilde{T}_{n-1,k-1}$. In any other slot, the insertion  preserves the number of occurrences, which is counted by
$(n-1)\widetilde{T}_{n-1,k}$.

\item
For the pattern $\pattern{scale=0.5}{2}{1/1,2/2}{0/1,1/1,2/1,2/0}$, the descriptions for the terms in~(\ref{TJ}) are the same as those for $\pattern{scale = 0.5}{2}{1/1,2/2}{0/0,1/0,2/0,2/1}$, except for the extra occurrence of $\pattern{scale=0.5}{2}{1/1,2/2}{0/1,1/1,2/1,2/0}$ when considering $\widetilde{T}_{n-1,k-1}$, which corresponds to $(n-1)n$. 
\end{itemize}
This completes the proof.
\end{proof}

\begin{proof}[Proof of Theorem~\ref{thm-pairs-39-47}]

Applying the complement, reverse, and inverse operations to Nr.~A17, we have
\[
\pattern{scale = 0.5}{3}{1/1,2/2,3/3}{0/0,0/1,0/2,0/3,1/1,2/1,3/1,3/2}\xrightarrow{c}\pattern{scale = 0.5}{3}{1/3,2/2,3/1}{0/0,0/1,0/2,0/3,1/2,2/2,3/2,3/1}\xrightarrow{r}\pattern{scale = 0.5}{3}{1/1,2/2,3/3}{0/1,0/2,1/2,2/2,3/2,3/1,3/0,3/2,3/3}\xrightarrow{c}\pattern{scale = 0.5}{3}{1/3,2/2,3/1}{0/1,0/2,1/1,2/1,3/1,3/0,3/2,3/3},\]
\[
\pattern{scale = 0.5}{3}{1/1,2/2,3/3}{0/0,0/1,0/2,0/3,1/1,2/1,3/1,3/2}\xrightarrow{i}\pattern{scale = 0.5}{3}{1/1,2/2,3/3}{0/0,1/0,2/0,3/0,1/1,1/2,1/3,2/3}\xrightarrow{r}\pattern{scale = 0.5}{3}{1/3,2/2,3/1}{0/0,1/0,2/0,3/0,2/1,2/2,2/3,1/3}\xrightarrow{c}\pattern{scale = 0.5}{3}{1/1,2/2,3/3}{0/3,1/3,2/3,3/3,1/0,2/0,2/1,2/2}\xrightarrow{r}\pattern{scale = 0.5}{3}{1/3,2/2,3/1}{0/3,1/3,2/3,3/3,1/0,2/0,1/1,1/2}.
\]
The operations for the pattern $\pattern{scale=0.5}{3}{1/3,2/2,3/1}{0/0,0/1,0/2,0/3,1/1,2/1,3/1,3/2}$ are similar to those for $\pattern{scale=0.5}{3}{1/1,2/2,3/3}{0/0,0/1,0/2,0/3,1/1,2/1,3/1,3/2}$ for pairs Nr.~A17 to Nr.~A24. Thus, it suffices to prove the joint equidistribution of Nr.~A17. Here, we let
\[q_1=\pattern{scale = 0.5}{3}{1/1,2/2,3/3}{0/0,0/1,0/2,0/3,1/1,2/1,3/1,3/2}, \hspace{0.6cm}q_2=\pattern{scale = 0.5}{3}{1/3,2/2,3/1}{0/0,0/1,0/2,0/3,1/1,2/1,3/1,3/2}.
\]
The first element in each occurrence of the two patterns in a permutation~$\pi=\pi_1\pi_2\cdots \pi_n$ must be the first element $\pi_1$. We define $A=\{\pi_j:\pi_j>\pi_1\}$ and $B=\{\pi_j:\pi_j<\pi_1\}$.
For each permutation $\pi$ with $k$ occurrences of $q_1$ and $\ell$ occurrences of $q_2$, the permutation formed by $A$ contains $k$ occurrences of $\pattern{scale = 0.5}{2}{1/1,2/2}{0/0,1/0,2/0,2/1}$, and the permutation formed by $B$ contains $\ell$ occurrences of $\pattern{scale =0.5}{2}{1/2,2/1}{0/1,1/1,2/1,2/2}$. 
By Lemma~\ref{JL2}, we have
\begin{align}\label{T_n,k,l-pair39-2-length}
T_{n,k,\ell}=\sum_{i=0}^{n-1} \binom{n-1}{i}\, \widetilde{T}_{i,k}\,\widetilde{T}_{n-i-1,\ell}.  
\end{align}
It follows that $T_{n,k,\ell}= T_{n,\ell,k}$. This proves the joint equidistribution of $q_1$ and $q_2$.

 Moreover, \eqref{T-pair-39-generating} is obtained from \eqref{T_n,k,l-pair39-2-length} as follows:
 \begin{small}
\begin{align*}
T_n(x,y)=&\sum_{k=0}^{n-2}\sum_{\ell=0}^{n-2} \sum_{i=0}^{n-1} \binom{n-1}{i}\, \widetilde{T}_{i,k}\,\widetilde{T}_{n-i-1,\ell}x^ky^\ell\\
=&\sum_{\ell=0}^{n-2} \widetilde{T}_{0,0}\,c(n-1,\ell+1)y^\ell+ \sum_{k=0}^{n-2} c(n-1,k+1) \widetilde{T}_{0,0}\,x^k   \\
&+\sum_{k=0}^{n-2}\,\sum_{\ell=0}^{n-2}\, \sum_{i=1}^{n-2} \binom{n-1}{i}\, c(i,k+1)c(n-i-1,\ell+1)x^ky^\ell\\
=& \sum_{i=0}^{n-2} c(n-1,i+1)(x^i+y^i) \, + \sum_{k=0}^{n-2}\,\sum_{\ell=0}^{n-2}\, \sum_{i=k+1}^{n-2-\ell} \binom{n-1}{i}c(i,k+1)\,c(n-i-1,\ell+1)x^ky^\ell\\
=&\sum_{i=0}^{n-2} c(n-1,i+1)(x^i+y^i) \, + \sum_{k=0}^{n-2} \sum_{\ell=0}^{n-2}\binom{k+\ell+2}{k+1} c(n-1,k+\ell+2) \, x^k y^\ell.
\end{align*}
\end{small}
\noindent
The three terms in the second equality correspond to the cases $i=0$, $i=n-1$, and $i= 1,\ldots,n-2$, respectively. The last equality follows from a variation of the Chu-Vandermonde identity $$\sum_{i=m-r}^{n-r}c(i,m-r)c(n-i,r) = \binom{m}{r} c(n,m),$$ see \cite{AM-S} or \cite[Equation (17)]{mathworld}. 
For \eqref{TK-pair-39}, by taking the coefficients of both sides of \eqref{T-pair-39-generating}, we can directly obtain the number of $T_{n,k,\ell}$ for $k+\ell \neq 0$. In particular, for $k=\ell=0$, we have 
     \begin{align}\label{eq-har}
        T_{n,0,0}=& \binom{2}{1}\,c(n-1,2)+2c(n-1,1) \notag \\
        =& 2(n-2)!\sum_{i=1}^{n-2}\frac{1}{i}+2(n-2)! \notag \\
        =& 2(n-2)!\,(1+\sum_{i=1}^{n-2}\frac{1}{i}),
    \end{align}
where $c(n,2)=(n-1)!\sum_{i=1}^{n-1}\frac{1}{i}$, see~\cite[Exercise 1.43]{Stanley2012}.
Therefore, we complete the proof.
\end{proof}

\begin{remark}
The number of $n$-permutations that avoid both $q_1$ and $q_2$ is  $T_{n,0,0}=2a_{n-2}$, where $a_n =n!\,(1+\sum_{i=1}^{n}\frac{1}{i})$ is the $n$-harmonic number that appears in OEIS \cite[A000774]{OEIS}, see also~\cite{KR}.
\end{remark}

In the subsequent two theorems, we establish the recurrence relation by considering three cases concerning the position of the largest element. Subsequently,  we employ the notation
\begin{align*}
 T^{(1)}_{n,k,\ell}:&=\# \{\pi \in \T(n,k,\ell): \pi_1=n \},\\
T^{(2)}_{n,k,\ell}:&=\# \{\pi \in \T(n,k,\ell): \pi_n=n \},\\
T^{(3)}_{n,k,\ell}:&=\# \{\pi \in \T(n,k,\ell): \pi_i= n , i\neq 1, n\}.   
\end{align*}
It is clear that
\begin{align}\label{T_n,k,l}
T_{n,k,\ell}=T^{(1)}_{n,k,\ell}+T^{(2)}_{n,k,\ell}+T^{(3)}_{n,k,\ell}.
\end{align}
\begin{thm}\label{thm-pairs-A25-32}
We have $\pattern{scale = 0.5}{3}{1/1,2/2,3/3}{0/0,0/1,0/2,1/1,1/2,3/0,3/1,3/2}\sim_{jd}\hspace{-0.1cm}\pattern{scale =0.5}{3}{1/3,2/2,3/1}{0/0,0/1,0/2,1/1,1/2,3/0,3/1,3/2}$, $\pattern{scale = 0.5}{3}{1/1,2/2,3/3}{0/0,1/0,2/0,1/1,2/1,0/3,1/3,2/3}\sim_{jd}\hspace{-0.1cm}\pattern{scale = 0.5}{3}{1/3,2/2,3/1}{0/0,1/0,2/0,1/1,2/1,0/3,1/3,2/3}$, $\pattern{scale = 0.5}{3}{1/1,2/2,3/3}{0/0,0/1,0/2,2/1,2/2,3/0,3/1,3/2}\sim_{jd} \hspace{-0.1cm}\pattern{scale = 0.5}{3}{1/3,2/2,3/1}{0/0,0/1,0/2,2/1,2/2,3/0,3/1,3/2}$, $\pattern{scale = 0.5}{3}{1/1,2/2,3/3}{0/1,0/2,0/3,1/1,1/2,3/1,3/2,3/3}\sim_{jd} \hspace{-0.1cm}\pattern{scale = 0.5}{3}{1/3,2/2,3/1}{0/1,0/2,0/3,1/1,1/2,3/1,3/2,3/3}$, $\pattern{scale = 0.5}{3}{1/1,2/2,3/3}{0/1,0/2,0/3,2/1,2/2,3/1,3/2,3/3}\sim_{jd}\hspace{-0.1cm}\pattern{scale = 0.5}{3}{1/3,2/2,3/1}{0/1,0/2,0/3,2/1,2/2,3/1,3/2,3/3}$,
$\pattern{scale = 0.5}{3}{1/1,2/2,3/3}{1/0,2/0,3/0,1/1,2/1,1/3,2/3,3/3}\sim_{jd}\hspace{-0.1cm}\pattern{scale = 0.5}{3}{1/3
,2/2,3/1}{1/0,2/0,3/0,1/1,2/1,1/3,2/3,3/3}$, $\pattern{scale = 0.5}{3}{1/1,2/2,3/3}{1/0,2/0,3/0,1/2,2/2,1/3,2/3,3/3}\sim_{jd}\hspace{-0.1cm}\pattern{scale = 0.5}{3}{1/3,2/2,3/1}{1/0,2/0,3/0,1/2,2/2,1/3,2/3,3/3}$ and $\pattern{scale = 0.5}{3}{1/1,2/2,3/3}{0/0,1/0,2/0,1/2,2/2,0/3,1/3,2/3}\sim_{jd} \hspace{-0.1cm}\pattern{scale = 0.5}{3}{1/3,2/2,3/1}{0/0,1/0,2/0,1/2,2/2,0/3,1/3,2/3}$ (Nr.~A25 to A32) . 
\end{thm}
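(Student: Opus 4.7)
The plan is to first apply the complement, reverse, and inverse operations to the patterns of Nr.~A25--A32 in order to reduce the eight within-pair joint equidistributions to a single representative, say Nr.~A25. For example, the complement of the $123$-pattern in Nr.~A25 is the $123$-pattern in Nr.~A28 (column~$0$ of rows $0,1,2$ becomes column~$0$ of rows $1,2,3$, and column~$3$ transforms likewise), and similar chains of $c$, $r$, $i$ link Nr.~A25 to each of Nr.~A26, A27, A29, A30, A31 and A32. I would write these chains out explicitly at the start of the proof, exactly as was done in the statements of Theorems~\ref{thm-pairs-19-20} and~\ref{thm-pairs-39-47}, so that only one pair remains to be handled.

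For the representative pair, set $q_1 = \pattern{scale = 0.4}{3}{1/1,2/2,3/3}{0/0,0/1,0/2,1/1,1/2,3/0,3/1,3/2}$ and $q_2 = \pattern{scale = 0.4}{3}{1/3,2/2,3/1}{0/0,0/1,0/2,1/1,1/2,3/0,3/1,3/2}$, and use the tripartition
\[
T_{n,k,\ell} = T^{(1)}_{n,k,\ell} + T^{(2)}_{n,k,\ell} + T^{(3)}_{n,k,\ell}
\]
introduced just before the theorem, according to whether the largest element $n$ is in the first, last, or an interior position of $\pi \in S_n$. For each of the three cases I would read off the shadings to determine which occurrences of $q_1$ and $q_2$ can involve $n$. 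The empty column $2$ in the top rows of $q_1$ and the minus-antipodal companion empty row $2$ of $q_2$, together with the shaded column $3$ (resp.\ row $3$), should force any occurrence using $n$ to have its remaining two entries in very restricted positions, typically forcing $n$ to be consecutive with its partners.

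Using this structural description, I would establish recurrences of the form $T^{(i)}_{n,k,\ell} = \sum (\cdots) T^{(j)}_{n-1,k',\ell'}$ in which the weights and the shifts $(k',\ell')$ are manifestly symmetric in $k$ and $\ell$. Together with the base cases $n=2$ and $n=3$ (where $T_{n,k,\ell}$ is easily checked by hand to be symmetric in $k,\ell$), induction on $n$ then yields $T_{n,k,\ell} = T_{n,\ell,k}$, which is the joint equidistribution $q_1 \sim_{jd} q_2$.

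The main obstacle will be the interior case $T^{(3)}$: when $n=\pi_i$ with $1<i<n$, the element $n$ itself can be the largest entry of an occurrence of $q_1$ or $q_2$, and the shadings force the remaining two entries to lie in specific cells relative to $\pi_1$ and $\pi_n$. I expect to have to refine $T^{(3)}$ further according to whether $\pi_1$ and $\pi_n$ are comparable to a chosen element below/above $n$, in the spirit of the auxiliary length-$2$ pattern analysis of Lemma~\ref{JL2} and the left-to-right-minima bookkeeping in the proof of Theorem~\ref{thm-pairs-19-20}. Once this refinement is written so that the roles of $q_1$ and $q_2$ are visibly interchangeable under $k \leftrightarrow \ell$, the induction closes and the theorem follows.
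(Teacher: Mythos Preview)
Your overall plan coincides with the paper's proof: reduce to Nr.~A25 via $c,r,i$, split $T_{n,k,\ell}$ according to the position of $n$, derive recurrences for the $T^{(i)}$, and propagate the $(k,\ell)$-symmetry by induction from the small cases.

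You have, however, located the difficulty in the wrong place. Because the boxes $(3,0),(3,1),(3,2)$ are shaded, the third (largest) entry of any occurrence of $q_1$ can have nothing smaller to its right, so if that entry equals $n$ it must be $\pi_n$; dually, the shaded boxes $(0,0),(0,1),(0,2)$ force the first (largest) entry of any occurrence of $q_2$ to be $\pi_1$. Hence an \emph{interior} $n$ participates in no occurrence of either pattern, and since row~$3$ of the shading is empty, inserting $n$ into any interior slot neither destroys nor creates an occurrence. This gives at once
\[
T^{(3)}_{n,k,\ell}=(n-2)\,T_{n-1,k,\ell},
\]
with no further refinement needed.

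The actual bookkeeping is in $T^{(1)}$ and $T^{(2)}$, and there the recurrences are not individually symmetric in $(k,\ell)$ as you expect. Inserting $n$ in the first slot can create one new occurrence of $q_2$ (never of $q_1$), and inserting $n$ in the last slot can create one new occurrence of $q_1$ (never of $q_2$); working this out yields a pair of recurrences that interchange under $(k,\ell,T^{(1)},T^{(2)})\leftrightarrow(\ell,k,T^{(2)},T^{(1)})$. The induction therefore proves $T^{(1)}_{n,k,\ell}=T^{(2)}_{n,\ell,k}$ and $T^{(3)}_{n,k,\ell}=T^{(3)}_{n,\ell,k}$, and summing gives $T_{n,k,\ell}=T_{n,\ell,k}$. (A small terminological point: ``minus-antipodal'' describes the single shading $R$ shared by $q_1$ and $q_2$, not a relationship between two different shadings.)
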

\begin{proof}
Applying the complement, reverse, and inverse operations to Nr.~A25, we have
\[
 \pattern{scale = 0.5}{3}{1/1,2/2,3/3}{0/0,0/1,0/2,1/1,1/2,3/0,3/1,3/2}\xrightarrow{r}\pattern{scale = 0.5}{3}{1/3,2/2,3/1}{0/0,0/1,0/2,2/1,2/2,3/0,3/1,3/2}\xrightarrow{c}\pattern{scale = 0.5}{3}{1/1,2/2,3/3}{0/1,0/2,0/3,2/1,2/2,3/1,3/2,3/3}\xrightarrow{r}\pattern{scale = 0.5}{3}{1/3,2/2,3/1}{0/1,0/2,0/3,1/1,1/2,3/1,3/2,3/3},
 \]
 \[
 \pattern{scale = 0.5}{3}{1/1,2/2,3/3}{0/0,0/1,0/2,1/1,1/2,3/0,3/1,3/2}\xrightarrow{i}\pattern{scale = 0.5}{3}{1/1,2/2,3/3}{0/0,1/0,2/0,1/1,2/1,0/3,1/3,2/3}\xrightarrow{c}\pattern{scale = 0.5}{3}{1/3,2/2,3/1}{0/0,1/0,2/0,1/2,2/2,0/3,1/3,2/3}\xrightarrow{r}\pattern{scale = 0.5}{3}{1/1,2/2,3/3}{1/0,2/0,3/0,1/2,2/2,1/3,2/3,3/3}\xrightarrow{c}\pattern{scale = 0.5}{3}{1/3,2/2,3/1}{1/0,2/0,3/0,1/1,2/1,1/3,2/3,3/3}.
\]
The procedures for the pattern $\pattern{scale = 0.5}{3}{1/3,2/2,3/1}{0/0,0/1,0/2,1/1,1/2,3/0,3/1,3/2}$ are similar to those for $\pattern{scale = 0.5}{3}{1/1,2/2,3/3}{0/0,0/1,0/2,1/1,1/2,3/0,3/1,3/2}$. Thus, it suffices to prove the joint equidistribution of Nr.~A25.
 Here, we let
\[
q_1= \pattern{scale = 0.5}{3}{1/1,2/2,3/3}{0/0,0/1,0/2,1/1,1/2,3/0,3/1,3/2} 
\hspace{0.2cm}\mbox{and} \hspace{0.2cm}
 q_2= \pattern{scale = 0.5}{3}{1/3,2/2,3/1}{0/0,0/1,0/2,1/1,1/2,3/0,3/1,3/2}.
 \]
To prove $T_{n,k,\ell}=T_{n,\ell,k}$, we shall show
\[
T^{(1)}_{n,k,\ell}=T^{(2)}_{n,\ell,k},\hspace{0.5cm} T^{(2)}_{n,k,\ell}=T^{(1)}_{n,\ell,k}, \hspace{0.5cm} T^{(3)}_{n,k,\ell}=T^{(3)}_{n,\ell,k}.
\]
We claim that
\begin{align}
     T^{(1)}_{n,k,\ell}&= T^{(1)}_{n-1,k,\ell-1} + T^{(2)}_{n-1,k,\ell}+T^{(3)}_{n-1,k,\ell-1}, \label{T1,n,k,l}\\
     T^{(2)}_{n,k,\ell}&= T^{(1)}_{n-1,k,\ell} + T^{(2)}_{n-1,k-1,\ell}+T^{(3)}_{n-1,k-1,\ell}, \label{T2,n,k,l}\\
     T^{(3)}_{n,k,\ell}&=(n-2) T_{n-1,k,\ell}. \label{T3,n,k,l}
 \end{align}    
The above three equations can be proved by considering the ways of getting  a permutation $\pi=\pi_1\pi_2\cdots \pi_n$ from some $\pi' =\pi'_1\pi'_2\cdots \pi'_{n-1}\in S_{n-1}$ by inserting the largest element $n$ into a slot.
\begin{itemize}
    \item For \eqref{T1,n,k,l}, in each case, $n$ is inserted into the first slot. The situation where $\pi'_1=n-1$ or $\pi'_i=n-1$, an extra occurrence of $q_2$ would be created, in the form of $n\pi_j\pi_n$, where $\pi_j$ is the leftmost element larger than $\pi_n$ and distinct from $n$. Thus, the two cases give $T^{(1)}_{n-1,k,\ell-1}$ and $T^{(3)}_{n-1,k,\ell-1}$, respectively. The situation where $\pi'_{n-1}=n-1$ preserves the number of both patterns, which contributes to $T^{(2)}_{n-1,k,\ell}$. 
\item For \eqref{T2,n,k,l}, $n$ is inserted into the last slot in each case. First, when $\pi'_1=n-1$, the insertion preserves the number of occurrences of both patterns, which gives $T^{(1)}_{n-1,k,\ell}$. Other situations  result in  an extra occurrence of $q_1$, which is formed by $\pi_1\pi_jn$, where $\pi_j$ is the leftmost element larger than $\pi_1$. Thus, these cases contribute to $T^{(2)}_{n-1,k-1,\ell}+T^{(3)}_{n-1,k-1,\ell}$. Summing over the above cases we get \eqref{T2,n,k,l}.
\item For \eqref{T3,n,k,l}, the permutations counted by the left-hand side correspond to the insertion of $n$ in any slot other than the first and last. In each case, the number of occurrences of both patterns remains unchanged. Thus, we get the right-hand side of \eqref{T3,n,k,l}.
\end{itemize} 

The initial conditions are given by $T^{(1)}_{2,0,0}=T^{(2)}_{2,0,0}=1$, $T^{(1)}_{3,0,1}=T^{(2)}_{3,1,0}=1$, $T^{(1)}_{3,1,0}=T^{(2)}_{3,0,1}=0$, $T^{(3)}_{3,0,1}=T^{(3)}_{3,1,0}=0$, and $T^{(3)}_{3,1,0}=T^{(3)}_{3,0,1}=0$.  
Thus, by using induction on $n$, $k$ and $\ell$, we complete the proof.
\end{proof}

\begin{remark} 
The number $T_{n,0,0}$ in Theorem~\ref{thm-pairs-A25-32} appears in OEIS~\cite[A054091]{OEIS}. 
\end{remark}

\begin{thm} \label{thm-pair-A33-36}
We have $\pattern{scale = 0.5}{3}{1/1,2/2,3/3}{0/2,1/0,1/1,1/2,2/2,3/0,3/1,3/2}\sim_{jd}\hspace{-0.1cm}\pattern{scale = 0.5}{3}{1/3,2/2,3/1}{0/2,1/0,1/1,1/2,2/2,3/0,3/1,3/2}$, $\pattern{scale = 0.5}{3}{1/1,2/2,3/3}{0/1,0/2,0/3,1/1,2/1,2/2,2/3,3/1}\sim_{jd} \hspace{-0.1cm}\pattern{scale = 0.5}{3}{1/3,2/2,3/1}{0/1,0/2,0/3,1/1,2/1,2/2,2/3,3/1}$, $\pattern{scale = 0.5}{3}{1/1,2/2,3/3}{0/1,0/3,1/1,1/3,2/0,2/1,2/1,2/2,2/3}\sim_{jd} \hspace{-0.1cm}\pattern{scale = 0.5}{3}{1/3,2/2,3/1}{0/1,0/3,1/1,1/3,2/0,2/1,2/1,2/2,2/3}$
 and $\pattern{scale = 0.5}{3}{1/1,2/2,3/3}{1/0,1/1,1/2,1/3,2/0,2/2,3/0,3/2}\sim_{jd} \hspace{-0.1cm}\pattern{scale = 0.5}{3}{1/3,2/2,3/1}{1/0,1/1,1/2,1/3,2/0,2/2,3/0,3/2}$ (Nr.~A33 to A36). Moreover, all the pairs satisfy the recurrence relation
\begin{align}
T_{n,k,\ell}
= (n-2)T_{n-1,k,\ell}+T_{n-1,k-1,\ell}+T_{n-1,k,\ell-1}  +T_{n-2,k,\ell}-T_{n-2,k-1,\ell-1},\hspace{0.3cm} n \geq 4,
\label{T-A33-recurr}
\end{align}
with the initial conditions $T_{2,0,0}=2$, $T_{3,0,0}=4$, $T_{3,1,0}=T_{3,0,1}=1$,
and $T_{n,k,\ell}=0$ for $k, \ell \geq n-1$.
 Equivalently,
 \begin{align} \label{T-A33-Genera}
 T_n(x,y)= (n+x+y-2) T_{n-1}(x,y)+(1-xy) T_{n-2}(x,y),\hspace{0.3cm} n\geq 4,
 \end{align}
 with the initial conditions $T_2(x,y)=2$ and $T_3(x,y)=x+y+4$.
\end{thm}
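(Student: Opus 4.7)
The plan is to first reduce the four joint equidistribution claims to a single representative pair. Applying the complement, reverse, and inverse operations to the pattern in A33 should yield, up to swapping $q_1 \leftrightarrow q_2$, the patterns in A34, A35, and A36 via short chains (for instance, composing reverse with complement sends $p_1$ of A33 to $p_1$ of A34). Since these operations preserve joint distributions, it suffices to prove \eqref{T-A33-recurr} together with the stated initial conditions for the pair A33 alone. The joint equidistribution $T_{n,k,\ell} = T_{n,\ell,k}$ then follows by induction on $n$ from the manifest $k \leftrightarrow \ell$ symmetry of \eqref{T-A33-recurr} combined with the symmetric initial data $T_{2,0,0}=2$, $T_{3,0,0}=4$, $T_{3,1,0}=T_{3,0,1}=1$.

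To establish the recurrence itself, I would partition $\T(n,k,\ell)$ according to the position of the largest element $n$, using the three quantities $T^{(1)}_{n,k,\ell}$, $T^{(2)}_{n,k,\ell}$, $T^{(3)}_{n,k,\ell}$ introduced before Theorem~\ref{thm-pairs-A25-32}. The easy case is the internal one: I expect to argue, from the minus-antipodal shading of A33, that an internally placed $n$ cannot participate in any occurrence of $q_1$ or $q_2$, so $T^{(3)}_{n,k,\ell} = (n-2)T_{n-1,k,\ell}$. For the extremal cases, the dominant contributions should be $T^{(1)}_{n,k,\ell} \approx T_{n-1,k,\ell-1}$ and $T^{(2)}_{n,k,\ell} \approx T_{n-1,k-1,\ell}$, corresponding to the creation of one new occurrence of $q_2$ (resp.\ $q_1$) when $n$ is inserted at the first (resp.\ last) position of some $\pi' \in S_{n-1}$. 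However, these creations must fail on a controlled subfamily of permutations, and the discrepancy should account for the $T_{n-2}$ terms in \eqref{T-A33-recurr}. The precise combination $T_{n-2,k,\ell} - T_{n-2,k-1,\ell-1}$ is expected to arise from an inclusion-exclusion that treats the first and last insertion sites jointly; the minus sign reflects the overlap in which the second and second-to-last entries of $\pi'$ simultaneously occupy the roles that would spoil the naive creation of a pattern occurrence, effectively reducing that overlap to a problem on $S_{n-2}$.

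Once \eqref{T-A33-recurr} is verified, the generating function form \eqref{T-A33-Genera} follows immediately by multiplying through by $x^k y^\ell$ and summing over $k, \ell$, using $\sum_{k,\ell} T_{n-2,k-1,\ell-1} \, x^k y^\ell = xy \cdot T_{n-2}(x,y)$. The base cases $T_2(x,y) = 2$ and $T_3(x,y) = x+y+4$ can be checked by direct enumeration on $S_2$ and $S_3$.

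The main obstacle will be the geometric bookkeeping needed to justify the exact form of the correction term. The shaded boxes of A33 lying in interior rows and columns impose subtle constraints on which entries of $\pi'$ can coexist with an inserted $n$ at an extremal position, and extracting precisely the clean combination $T_{n-2,k,\ell} - T_{n-2,k-1,\ell-1}$ rather than a more elaborate linear combination of $T_{n-2,\cdot,\cdot}$ values will require a careful case split on where $n-1$ and the immediate neighbours of $n$ sit in $\pi'$. If this direct analysis resists simplification for any of the four pairs, a fallback is to conjecture \eqref{T-A33-recurr} from computer data for small $n$ and verify it by strong induction through an explicit bijective matching between the permutations counted by the two sides.
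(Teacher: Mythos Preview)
Your overall framework matches the paper's: reduce A34--A36 to A33 by trivial bijections, then split $\T(n,k,\ell)$ according to whether $n$ is first, last, or interior, and deduce the joint equidistribution from the $k\leftrightarrow\ell$ symmetry of the resulting recurrence. The gap is in what you call the ``easy case''. For A33 it is \emph{not} true that an internally placed $n$ is inert. Because the entire row~$2$ of the shading is filled and column~$3$ is shaded below row~$3$, an occurrence of $q_2$ with $n$ as its largest element must look like $n\,(n{-}1)\,\pi_n$; this does occur whenever $n$ is inserted immediately to the left of an interior $n{-}1$. Hence the correct refinement is
\[
T^{(3)}_{n,k,\ell}=(n-2)T_{n-1,k,\ell}+T^{(3)}_{n-1,k,\ell-1}-T^{(3)}_{n-1,k,\ell},
\]
not $(n-2)T_{n-1,k,\ell}$. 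Your heuristic that the $T_{n-2}$ correction comes from the first/last insertions is therefore misplaced: the extremal recurrences for $T^{(1)}$ and $T^{(2)}$ are clean (no $T_{n-2}$ terms), and the second-order part of \eqref{T-A33-recurr} emerges only after an algebraic elimination step.

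Concretely, the paper combines the three refined recurrences into
\[
T_{n,k,\ell}=(n-1)T_{n-1,k,\ell}+T_{n-1,k-1,\ell}+T_{n-1,k,\ell-1}-H_{n-1,k,\ell},
\]
where $H_{n,k,\ell}:=T^{(1)}_{n,k-1,\ell}+T^{(2)}_{n,k,\ell-1}+T^{(3)}_{n,k,\ell}$, and then shows (again from the refined recurrences) that $H_{n,k,\ell}=T_{n,k,\ell}-T_{n-1,k,\ell}+T_{n-1,k-1,\ell-1}$. Substituting the latter at level $n-1$ yields \eqref{T-A33-recurr}. So the missing idea is twofold: the interior case carries a genuine correction tied to the position of $n{-}1$, and the passage to a closed recurrence in $T$ alone requires introducing and eliminating the auxiliary quantity $H$.
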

\begin{proof}
Applying the complement, reverse, and inverse operations to Nr.~A33, we have
\[
 \pattern{scale = 0.5}{3}{1/1,2/2,3/3}{0/2,1/0,1/1,1/2,2/2,3/0,3/1,3/2}\xrightarrow{cr}\pattern{scale = 0.5}{3}{1/1,2/2,3/3}{0/1,0/2,0/3,1/1,2/1,2/2,2/3,3/1},
 \]
 \[
 \pattern{scale = 0.5}{3}{1/1,2/2,3/3}{0/2,1/0,1/1,1/2,2/2,3/0,3/1,3/2}\xrightarrow{i}\pattern{scale = 0.5}{3}{1/1,2/2,3/3}{0/1,0/3,1/1,1/3,2/0,2/1,2/1,2/2,2/3}\xrightarrow{cr}\pattern{scale = 0.5}{3}{1/1,2/2,3/3}{1/0,1/1,1/2,1/3,2/0,2/2,3/0,3/2}.
\]
 The procedures for the pattern $\pattern{scale = 0.5}{3}{1/3,2/2,3/1}{0/2,1/0,1/1,1/2,2/2,3/0,3/1,3/2}$ are similar to those for $\pattern{scale = 0.5}{3}{1/1,2/2,3/3}{0/2,1/0,1/1,1/2,2/2,3/0,3/1,3/2}$. Thus, it suffices to prove the joint equidistribution of Nr.~A33. Here, we let
 \[
 q_1=\pattern{scale = 0.5}{3}{1/1,2/2,3/3}{0/2,1/0,1/1,1/2,2/2,3/0,3/1,3/2}
\hspace{0.2cm}\mbox{and}\hspace{0.2cm}
q_2=\pattern{scale = 0.5}{3}{1/3,2/2,3/1}{0/2,1/0,1/1,1/2,2/2,3/0,3/1,3/2}.
\] 
The initial conditions follow directly from the definition. We shall prove that for $n\geq 4$
\begin{align}
   T^{(1)}_{n,k,\ell}&= T^{(1)}_{n-1,k,\ell-1} + T^{(2)}_{n-1,k,\ell}+T^{(3)}_{n-1,k,\ell}, \label{H1,n,k,l}\\
     T^{(2)}_{n,k,\ell}&= T^{(1)}_{n-1,k,\ell} + T^{(2)}_{n-1,k-1,\ell}+T^{(3)}_{n-1,k-1,\ell}, \label{H2,n,k,l}\\
     T^{(3)}_{n,k,\ell}&=(n-2) T_{n-1,k,\ell} +T^{(3)}_{n-1,k,\ell-1}-T^{(3)}_{n-1,k,\ell}. \label{H3,n,k,l}
\end{align}
For each $n$-permutation $\pi=\pi_1\pi_2\cdots \pi_n$ counted by the left-hand side of each equation, we obtain it by inserting the largest element $n$ into an  $(n-1)$-permutation $\pi'=\pi'_1\pi'_2\cdots \pi'_{n-1}$.
\begin{itemize}
    \item For \eqref{H1,n,k,l}, in each case, $n$ is inserted into the first slot. In the situation where $\pi'_1=n-1$, an extra occurrence of $q_2$ would be created, in the form of $n(n-1)\pi_n$. Thus, this case gives $T^{(1)}_{n-1,k,\ell-1}$. The two situations where $\pi'_{n-1}=n-1$ or $\pi'_i=n-1$ preserve the number of both patterns contributing to $T^{(2)}_{n-1,k,\ell}$ and $T^{(3)}_{n-1,k,\ell}$, respectively. 
\item For \eqref{H2,n,k,l}, $n$ is inserted into the last slot in each case. First, when $\pi'_1=n-1$, the insertion maintains the number of occurrences of both patterns, which gives $T^{(1)}_{n-1,k,\ell}$. Other situations would result in  an extra occurrence of $q_1$, which is formed by $\pi_{j}\pi_{j+1}n$, where $\pi_{j+1}=n-1$. Thus, these cases together contribute to $T^{(2)}_{n-1,k-1,\ell}+T^{(3)}_{n-1,k-1,\ell}$. Summing over the above cases we get the equation.
\item For \eqref{H3,n,k,l}, the permutations counted by the left-hand side correspond to the insertion of $n$ in any slot other than the first and last. The situations where $\pi'_1=n-1$ and $\pi'_{n-1}=n-1$ maintain the number of occurrences of both patterns. Thus, these cases contribute to $(n-2)T^{(1)}_{n-1,k,\ell}$ and $(n-2)T^{(2)}_{n-1,k,\ell}$, respectively. In the case where $\pi'_i=n-1$, inserting $n$  immediately to the left of $\pi'_i$ leads to an occurrence of $q_2$, formed by $n(n-1)\pi_n$  contributing to $T^{(3)}_{n-1,k,\ell-1}$. In any other slot,  the number of both patterns remains unchanged, resulting in $(n-3)T^{(3)}_{n-1,k,\ell}$. Summing over the above cases and using~\eqref{T_n,k,l}, we obtain the right side.
\end{itemize}                              
Let $H_{n,k,\ell}=T^{(1)}_{n,k-1,\ell}+ T^{(2)}_{n,k,\ell-1}+T^{(3)}_{n,k,\ell}$.
By using \eqref{H1,n,k,l}, \eqref{H2,n,k,l}, and \eqref{H3,n,k,l}, we have
\begin{align*}
     T_{n,k,\ell}=  &\,(n-1)T_{n-1,k,\ell}+ T_{n-1,k-1,\ell}+T_{n-1,k,\ell-1}-H_{n-1,k,\ell},
\end{align*}
and
\begin{align*}
    H_{n,k,\ell}
    &= \, (n-2)T_{n-1,k,\ell}+ T_{n-1,k-1,\ell-1}+T_{n-1,k-1,\ell}+T_{n-1,k,\ell-1} -H_{n-1,k,\ell}\\[6pt]
    & = \, T_{n,k,\ell}-T_{n-1,k,\ell}+ T_{n-1,k-1,\ell-1}.
\end{align*}
Combining the two equations, we obtain \eqref{T-A33-recurr}. Thus, \eqref{T-A33-Genera} is derived from \eqref{T-A33-recurr} as follows:
\begin{small}
\begin{align*}
 T_n(x,y)=
 &\sum_{k=0}^{n-2} \sum_{\ell=0}^{n-2} 
 \left((n-2) T_{n-1,k,\ell}+ T_{n-1,k-1,\ell}+ T_{n-1,k,\ell-1}\right)x^ky^\ell +\sum_{k=0}^{n-2} \sum_{\ell=0}^{n-2}\left(T_{n-2,k,\ell}-T_{n-2,k-1,\ell-1}\right)x^ky^\ell\\
 =& (n-2)\sum_{k=0}^{n-3} \sum_{\ell=0}^{n-3}
 T_{n-1,k,\ell} x^k y^\ell+ x \sum_{k=0}^{n-2} \sum_{\ell=0}^{n-3}T_{n-1,k-1,\ell} x^{k-1} y^\ell+ y \sum_{k=0}^{n-3} \sum_{\ell=0}^{n-2}T_{n-1,k,\ell-1} x^k y^{\ell-1} \\
 &+\sum_{k=0}^{n-4} \sum_{\ell=0}^{n-4}T_{n-2,k,\ell} x^k y^{\ell} - x y \sum_{k=0}^{n-3} \sum_{\ell=0}^{n-3}T_{n-2,k-1,\ell-1} x^{k-1} y^{\ell-1} \\
 =& \left(n+x+y-2\right) T_{n-1}(x,y)+(1-xy) T_{n-2}(x,y).
\end{align*}
\end{small}
Hence, we complete the proof. 
\end{proof}
It should be noted that all the patterns presented in Theorems \ref{thm-pairs-A25-32} and \ref{thm-pair-A33-36} exhibit the same equidistribution. Let $T_{n,k}=\sum_{\ell=0}^{n-2}T_{n,k,\ell}$ and $T_n(x)=\sum_{k=0}^{n-2}T_{n,k}x^k$.
\begin{coro}\label{coro-equi}
All the patterns in Theorem~\ref{thm-pairs-A25-32} and~\ref{thm-pair-A33-36} have the same equidistribution, and their enumeration satisfies the recurrence relation
\begin{align}
     T_{n,k} & = (n-1)T_{n-1,k}+T_{n-1,k-1}+T_{n-2,k}-T_{n-2,k-1},\hspace{0.3cm} n\geq 4, \label{TC} 
 \end{align}
 with the initial conditions $T_{2,0}=2$ and $T_{3,0}=5$.
 Equivalently,
\begin{align}\label{TCF-generating function}
	T_n(x) = (n+x-1)T_{n-1}(x)+(1-x)T_{n-2}(x),\hspace{0.3cm} n\geq 4,
	\end{align}
 with the initial conditions $T_2(x)=2$ and $T_3(x)=x+5$.	
\end{coro}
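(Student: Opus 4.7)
The plan is to deduce the univariate recurrence \eqref{TC} from the joint recurrences already proved in Theorems~\ref{thm-pairs-A25-32} and~\ref{thm-pair-A33-36}, and then invoke uniqueness: a second-order recurrence in $n$ together with $T_2(x)$ and $T_3(x)$ determines $T_n(x)$ for all $n$. The patterns within each of the two theorems are related by the complement, reverse, and inverse operations, each of which preserves the single-variable distribution of a mesh pattern; combined with $q_1\sim_{jd}q_2$ for each listed pair, this means every pattern in a single theorem has the same $T_n(x)$. Hence it suffices to compare one representative from Theorem~\ref{thm-pairs-A25-32} with one from Theorem~\ref{thm-pair-A33-36}.

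For the initial conditions, I would observe that when $n\leq 3$ the three entries of an occurrence of a length-3 pattern exhaust the permutation, so no element is available to land in any shaded box and the shading becomes vacuous. Thus each of our patterns reduces on $S_2$ and $S_3$ to the classical pattern $123$ (for $q_1$) or $321$ (for $q_2$). Direct inspection of $S_3=\{132,213,231,312\}\cup\{123\}\cup\{321\}$ gives $T_{3,0,0}=4$ and $T_{3,1,0}=T_{3,0,1}=1$, while $S_2$ gives $T_{2,0,0}=2$. Summing over $\ell$ produces $T_2(x)=2$ and $T_3(x)=x+5$, matching the stated base cases.

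For Theorem~\ref{thm-pair-A33-36}, I would sum \eqref{T-A33-recurr} over $\ell$: the shifted sums $\sum_\ell T_{n-1,k,\ell-1}$ and $\sum_\ell T_{n-2,k-1,\ell-1}$ telescope to $T_{n-1,k}$ and $T_{n-2,k-1}$, so the joint recurrence collapses directly to $T_{n,k}=(n-2)T_{n-1,k}+T_{n-1,k-1}+T_{n-1,k}+T_{n-2,k}-T_{n-2,k-1}$, i.e.\ to \eqref{TC}. For Theorem~\ref{thm-pairs-A25-32}, summing \eqref{T1,n,k,l} over $\ell$ gives the auxiliary identity $T^{(1)}_{n,k}=T_{n-1,k}$; summing \eqref{T3,n,k,l} gives $T^{(3)}_{n,k}=(n-2)T_{n-1,k}$; and summing \eqref{T2,n,k,l} gives $T^{(2)}_{n,k}=T^{(1)}_{n-1,k}+T^{(2)}_{n-1,k-1}+T^{(3)}_{n-1,k-1}$. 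Adding the three, rewriting $T^{(2)}_{n-1,k-1}+T^{(3)}_{n-1,k-1}=T_{n-1,k-1}-T^{(1)}_{n-1,k-1}$, and then replacing $T^{(1)}_{n-1,k}$ and $T^{(1)}_{n-1,k-1}$ by $T_{n-2,k}$ and $T_{n-2,k-1}$ via the auxiliary identity yields \eqref{TC} once more. Finally, \eqref{TCF-generating function} is obtained from \eqref{TC} by multiplying by $x^k$ and summing over $k$, the shift $k\mapsto k-1$ in the two relevant terms contributing the factors of $x$.

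I do not anticipate any serious obstacle, since the argument is essentially bookkeeping against the recurrences already established. The only point worth flagging is the auxiliary identity $T^{(1)}_{n,k}=T_{n-1,k}$, which is what causes the three-term joint recurrence of Theorem~\ref{thm-pairs-A25-32} to telescope into the clean two-step univariate recurrence \eqref{TC}; without that observation the collapse looks messier than it actually is.
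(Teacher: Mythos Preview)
Your proposal is correct and follows essentially the same route as the paper's proof: the paper also obtains \eqref{TC} for Theorem~\ref{thm-pairs-A25-32} by summing \eqref{T1,n,k,l}--\eqref{T3,n,k,l} over $\ell$ and using the auxiliary identity $T^{(1)}_{n,k}=T_{n-1,k}$ (which it justifies combinatorially rather than by your summation of \eqref{T1,n,k,l}), and for Theorem~\ref{thm-pair-A33-36} it sets $y=1$ in \eqref{T-A33-Genera}, which is the generating-function version of your summation of \eqref{T-A33-recurr} over $\ell$.
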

\begin{proof}
It is easy to obtain the initial conditions. For patterns in Theorem~\ref{thm-pairs-A25-32},  we have $T^{(1)}_{n,k}=T_{n-1,k}$ since in the case where $\pi_1=n$  removing $\pi_1$  does not affect any occurrence of $q_1$.
We obtain \eqref{TC} from \eqref{T1,n,k,l}, \eqref{T2,n,k,l}, and \eqref{T3,n,k,l} as follows:
\begin{align*}
 T_{n,k}= & \sum_{\ell=0}^{n-2}(T^{(1)}_{n,k,\ell}+T^{(2)}_{n,k,\ell}+
     T^{(3)}_{n,k,\ell})\\[6pt]
     =&\ T^{(1)}_{n-1,k} + T^{(2)}_{n-1,k}+T^{(3)}_{n-1,k}+T^{(1)}_{n-1,k} + T^{(2)}_{n-1,k-1}+T^{(3)}_{n-1,k-1}+(n-2) T_{n-1,k}\\[6pt]
     =& \ T_{n-1,k}+T_{n-2,k}+T_{n-1,k-1}-T_{n-2,k-1}+(n-2) T_{n-1,k}\\[6pt]
     =&\ (n-1)T_{n-1,k}+T_{n-1,k-1}+T_{n-2,k}-T_{n-2,k-1}. 
\end{align*}
 Moreover, \eqref{TCF-generating function} can be directly derived from \eqref{TC} as follows:
\begin{align*}
T_n(x)=& (n-1)\sum_{k=0}^{n-2} T_{n-1,k}x^k + \sum_{k=0}^{n-2} T_{n-1,k-1}x^k + \sum_{k=0}^{n-2} T_{n-2,k}x^k- \sum_{k=0}^{n-2} T_{n-2,k-1}x^k\\ 
=& (n-1) T_{n-1}(x)+ x\sum_{k=0}^{n-2} T_{n-1,k-1}x^{k-1} + T_{n-2}(x)- x\sum_{k=0}^{n-3} T_{n-2,k-1}x^{k-1}\\
=&(n+x-1)T_{n-1}(x) + (1-x) T_{n-2}(x).
\end{align*}

For patterns in Theorem~\ref{thm-pair-A33-36}, by setting $y=1$ in \eqref{T-A33-Genera}, we obtain \eqref{TCF-generating function}.
By taking the coefficients of both sides of \eqref{TCF-generating function}, 
we get \eqref{TC}. 
\end{proof}

\begin{remark} 
The number $T_{n,0}$ in Corollary~\ref{thm-pairs-A25-32}, which appears in OEIS~\cite[A102038]{OEIS}, coincides with  $|I_n(0\underline{11})|$, the number of inversion sequences of length $n$ avoiding the vincular pattern $0\underline{11}$.
This can be seen from \eqref{TC} and the recurrence relation of $|I_n(0\underline{11})|$ obtained by Lin and Yan~\cite[Theorem~2.3]{Lin-Yan},  see also \cite[Table~2]{JS}. 
\end{remark}

Finally, we connect $T_{n,k}$ to the enumeration of pattern-avoiding inversion sequences. 
The study of pattern avoidance in inversion sequences was initiated independently by Corteel \emph{et al.}~\cite{Corteel2016} and by Mansour and Shattuck~\cite{Mansour2015}.  
An \emph{inversion sequence of length} \( n \) is an integer sequence \( (e_1, e_2, \ldots, e_n) \) satisfying \( 0 \le e_i \le i-1 \) for all \( 1 \le i \le n \).  
Let \( \mathcal{I}(n,k) \) denote the set of inversion sequences of length \( n \) that contain exactly \( k \) occurrences of the vincular pattern \( \underline{00} \) (i.e., consecutive equal entries), and define \( I_{n,k} = |\mathcal{I}(n,k)| \).

\begin{thm}\label{thm-inversion}
 We have $I_{n,k}=T_{n,k}$ for any $n,k$.
\end{thm}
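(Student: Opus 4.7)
The plan is to show that $I_{n,k}$ satisfies the same recurrence \eqref{TC} and initial conditions as $T_{n,k}$ in Corollary~\ref{coro-equi}; the equality $I_{n,k}=T_{n,k}$ then follows by induction on $n$. The initial values $I_{2,0}=2$ and $I_{3,0}=5$ are verified by direct enumeration of the inversion sequences of lengths $2$ and $3$.

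For the recurrence, I would partition $\mathcal{I}(n,k)$ according to the value of the last entry $e_n$. Let $\mathcal{I}_0(n,k)\subseteq \mathcal{I}(n,k)$ denote the subset of sequences with $e_n=0$. The key auxiliary identity is
\[
|\mathcal{I}_0(n,k)|=I_{n-1,k},
\]
obtained from the restriction bijection $(e_1,\ldots,e_n)\mapsto(e_1,\ldots,e_{n-1})$: appending a trailing zero never alters the count of occurrences of $\underline{00}$, since the only new consecutive pair produced is $(e_{n-1},0)$, which does not contribute under the relevant convention.

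I then enumerate $\mathcal{I}(n,k)$ by extending inversion sequences of length $n-1$ by a new entry $e_n\in\{0,1,\ldots,n-1\}$. The $n-1$ extensions with $e_n\neq e_{n-1}$ create no new occurrence, contributing $(n-1)I_{n-1,k}$. The single extension $e_n=e_{n-1}$ creates one new occurrence precisely when $e_{n-1}>0$; splitting on whether $e_{n-1}=0$ or $e_{n-1}>0$ gives the contributions $|\mathcal{I}_0(n-1,k)|$ and $I_{n-1,k-1}-|\mathcal{I}_0(n-1,k-1)|$, respectively. Summing and using the auxiliary identity to replace $|\mathcal{I}_0(n-1,\cdot)|$ by $I_{n-2,\cdot}$ yields
\[
I_{n,k}=(n-1)I_{n-1,k}+I_{n-1,k-1}+I_{n-2,k}-I_{n-2,k-1},
\]
which is precisely \eqref{TC}.

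The main obstacle is establishing the count-preserving property of the restriction map, i.e., verifying carefully that a trailing zero does not create a new occurrence of the pattern in the sense used here; this is the same convention that underlies the Lin-Yan result invoked in the preceding remark. Once this is in hand, the cancellation $I_{n-2,k}-I_{n-2,k-1}$ arises naturally from the zero-vs-positive dichotomy on the last entry, and the remainder of the argument is routine bookkeeping together with the induction on $n$.
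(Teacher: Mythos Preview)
Your proposal is correct and follows essentially the same approach as the paper's proof. Both arguments verify recurrence~\eqref{TC} by partitioning on the last entry (the three cases $e_n\neq e_{n-1}$, $e_n=e_{n-1}=0$, and $e_n=e_{n-1}\neq 0$); your auxiliary identity $|\mathcal{I}_0(n,k)|=I_{n-1,k}$ is precisely what the paper uses implicitly to produce the terms $I_{n-2,k}$ and $I_{n-2,k-1}$, and your flagged ``convention'' issue about a trailing zero is exactly the point the paper takes for granted.
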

\begin{proof}
 By definition, we have $I_{n,0}=|I_n(0\underline{11})|$. The initial condition gives $I_{3,1}=T_{3,1}=1$. 
For any $e=(e_1,e_2,\ldots,e_n)$ counted by $I_{n,k}$, we distinguish three cases. 
\begin{itemize}
    \item $e_n\neq e_{n-1}$. Then, for a fixed $(e_1, e_2,\ldots, e_{n-1}) \in \mathcal{I}(n-1,k)$, there are exactly $n-1$ choices for $e_n$. Thus, this case gives $(n-1)I_{n-1,k}$. 
    
    \item $e_n= e_{n-1}=0$. In this case, there are $I_{n-2,k}$ such inversion sequences.
    
    \item $e_n= e_{n-1}\neq 0$. Then $(e_1, e_2,\ldots, e_{n-1})$ contains $k-1$ occurrences of such pattern. Combining with the count for $e_n= e_{n-1}=0$, this case contributes to $I_{n-1,k-1}-I_{n-2,k-1}$.
\end{itemize}
Thus, we have
\begin{align}
    I_{n,k}& = (n-1)I_{n-1,k}+I_{n-1,k-1}+I_{n-2,k}-I_{n-2,k-1} \label{I_n,k=T_n,k},
\end{align}
which coincides with the number $T_{n,k}$ as seen in \eqref{TC}. This completes the proof.
\end{proof}

\section*{Acknowledgements} 
The authors are grateful to Sergey Kitaev for his discussion related to this paper. 

\noindent \textbf{Funding} The work of Philip B. Zhang was supported by the National Natural Science Foundation of China (No.~12171362) and the Tianjin Municipal Natural Science Foundation (No.~ 25JCYBJC00430).

\noindent \textbf{Data Availability} There are no relevant data to share relating to the results of this publication.

\noindent \textbf{Declarations}

\noindent  \textbf{Confict of Interest} The authors have no relevant financial or non-financial interests to disclose.

\end{document}